\theoremstyle{fancy}
\newtheorem{thm}{Theorem}[section]
\newtheorem{lemma}[thm]{Lemma}
\newtheorem{definition}{Definition}[section]
\newtheorem{conj}{Conjecture}[section]
\newtheorem{rmk}{Remark}[section]
\begin{document}

\title{On Grauert-Riemenschneider type criterions}
\author{Zhiwei Wang}
\address{ School
of Mathematical Sciences\\Beijing Normal University\\ Beijing\\ 100875\\ P. R. China}
\email{zhiwei@bnu.edu.cn}





\begin{abstract}Let $(X,\omega)$ be a compact Hermitian manifold of complex dimension $n$. In this article, we first survey  recent progress towards Grauert-Riemenschneider type criterions.  Secondly, we give a simplified proof of Boucksom's conjecture  given by the author under the assumption that the  Hermitian metric $\omega$ satisfies $\partial\overline{\partial}\omega^l=$ for all $l$, i.e., if $T$ is a closed positive current on $X$ such that $\int_XT_{ac}^n>0$, then the class $\{T\}$ is big and $X$ is K\"{a}hler. Finally, as an easy observation, we point out that Nguyen's result can be generalized as follows: if $\partial\overline{\partial}\omega=0$, and $T$ is a closed positive current with analytic singularities, such that $\int_XT^n_{ac}>0$, then the class $\{T\}$ is big and $X$ is K\"{a}hler.
\end{abstract}

\thanks{The author was partially supported by the Fundamental Research Funds for the Central Universities and by the NSFC grant NSFC-11701031}

\subjclass[2010]{ 53C55, 32Q57,  32Q15, 53C65}
\keywords{Closed positive current, Demailly-P\u{a}un's conjecture,  Boucksom's conjecture, K\"{a}hler current, Fujiki class}

\maketitle

\section{Introduction}
Let $(X,\omega)$ be a compact Hermitian manifold of dimension $n$, $\omega$ be a Hermitian metric on $X$. The Grauert-Riemenschneider conjecture (G-R conjecture for short)  \cite{GR70} reads that: if there is a holomorphic Hermitian line bundle $(L,h)\rightarrow X$, such that $c(L)=\frac{i}{2\pi}\Theta_h\geq 0$ on $X$ and is strictly positive on a dense open subset of $X$, then $X$ is Moishezon. A compact complex manifold is said to be Moishezon, if it is birational to a projective manifold. 

 This conjecture was first proved by Siu \cite{Siu84b} and then by Demailly \cite{Dem85} shortly later. 

 Siu even  proved that if $c(L)\geq 0$ on $X$ and strictly positive at least at one point in $X$, then $X$ is Moishezon.  Siu first proved that, under the curvature condition of G-R conjecture, one has 
 \begin{align*}
 \dim H^q(X,L^k)=o(k^n), ~~~~\mbox{for}~~~~q\geq 1.
 \end{align*}
Then, by a Riemann-Roch theorem argument, one can get that the Kodaira dimension of  $L$ is of top dimension, i.e., dimension $n$. Thus $X$ is Moishezon.

 Demailly \cite{Dem85, Dem91} established the celebrated holomorphic Morse inequalities. The strong version of holomorphic Morse inequalities is stated as follows:
 \begin{align*}
 \sum_{0\leq j\leq q}(-1)^{q-j}\dim_\mathbb{C} H^j(X,L^k)\leq (-1)^q \frac{k^n}{n!}\int_{X(\leq q)}(c(L))^n+o(k^n)， 
 \end{align*}
 where $X(\leq q) $ be the set where $c(L)$ is  non degenerate and has at most q negative eigenvalue. Take $q=1$ in above inequalities, one can get that 
 \begin{align*}
 \dim H^0(X,L^k)&\geq \dim H^0(X,L^k)-\dim H^1(X,L^k)\\
 &\geq \frac{k^n}{n!} \int_{X(\leq 1)}(c(L))^n+o(k^n).
 \end{align*}
Under the curvature condition of G-R conjecture, one finds $X(\leq 1)$ is a non-empty open set of $X$, then the integral term $ \int_{X(\leq 1)}(c(L))^n>0$, from which one can easily find the Kodaira dimension of $L$ is $n$, i.e., $X$ is Moishezon.  The advantage of Demailly's holomorphic Morse inequalities is that they give a criterion of a Hermitian holomorphic line bundle to be big by the positivity of an integral of the curvature, which is a little bit weaker than the positivity of the curvature itself.

Around 1990's, due to the development of several complex variables,  singular metrics were introduced to holomorphic line bundles.  An Hermitian metric $h$ of a holomorphic line bundle $L$ is said to be singular, if locally
we can write $h=e^{-2\varphi}$, with $\varphi\in L^1_{loc}$.   There are many positivities due to the curvature current of the singular metric, e.g., semi positive, nef, pseudo effective and big. In general, one has semi positive $\subset$ nef $\subset $ pseudo effecitive $\subset $ big.

In \cite{JS93}, Ji-Shiffman proved  that a holomorphic line bundle is big if and only if there is a singular metric of $L$ such that the curvature current is positive definite, i.e., is a K\"{a}hler current.

A compact complex manifold is said to be in the Fujici class $\mathcal{C}$, if  it is bimeromorphic to a compact K\"{a}hler manifold. It was proved by Demailly-Paun in \cite{DP04} that $X$ is in the Fujiki class $\mathcal{C}$ if and only if it carries a K\"{a}hler current.

Recently, experts in several complex variables and complex geometer try to relax the assumptions in G-R conjecture, say from  the integral class to the transcendental class, and from semi positivities to nefness and pseudo effectiveness.
Along these directions, there are the following conjectures.

\begin{conj}[Boucksom's conjecture, cf. \cite{Bou02}]\label{conj}
If a compact complex manifold $X$ carries a closed positive $(1,1)$-current $T$ with $\int_X T^n_{ac}>0$, then the class $\{T\}$$X$ is big,  and thus $X$ in the  \textit{Fujiki} class $\mathcal{C}$.
\end{conj}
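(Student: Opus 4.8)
The plan is to show that the class $\{T\}$ contains a K\"ahler current; by definition this is exactly what it means for $\{T\}$ to be big, and the theorem of Demailly--Paun recalled above then places $X$ in the Fujiki class $\mathcal{C}$. So the entire content is analytic: starting only from the positivity of the absolutely continuous Monge--Amp\`ere mass $\int_X T_{ac}^n>0$, I want to manufacture a closed positive $(1,1)$-current $S\in\{T\}$ with $S\ge\delta\omega$ for some $\delta>0$, where $\omega$ is a fixed Gauduchon metric (such a metric exists on every compact complex manifold). Fixing a smooth representative $\alpha\in\{T\}$, I would recast ``$\{T\}$ is big'' in its dual form and argue by contradiction.

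First I would set up the duality. Suppose $\{T\}=\{\alpha\}$ is \emph{not} big, i.e.\ for every $\delta>0$ the class $\{\alpha-\delta\omega\}$ fails to be pseudoeffective. By Lamari's duality between pseudoeffective $(1,1)$-classes and Gauduchon-type currents, for each $\delta$ there is a nonzero positive $\partial\overline{\partial}$-closed $(n-1,n-1)$-current $\Omega_\delta$, normalized by $\int_X\omega\wedge\Omega_\delta=1$, with $\int_X(\alpha-\delta\omega)\wedge\Omega_\delta<0$, hence $\int_X\alpha\wedge\Omega_\delta<\delta$. The set of positive $(n-1,n-1)$-currents of bounded mass is weakly compact, so along $\delta\to 0$ I extract a weak limit $\Omega$, again positive and $\partial\overline{\partial}$-closed, nonzero because the normalization gives $\int_X\omega\wedge\Omega=1$, and satisfying the crucial inequality $\int_X\alpha\wedge\Omega\le 0$. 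This pairing descends to the Bott--Chern class of $\alpha$ against the Aeppli class of $\Omega$, so it is intrinsic.

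The heart of the argument is to contradict $\int_X\alpha\wedge\Omega\le 0$ using $\int_X T_{ac}^n>0$, and this is where the complex Monge--Amp\`ere equation enters. Using Demailly's regularization I would first pass to a current with analytic singularities in $\{T\}$ carrying essentially the same absolutely continuous mass, and then, on the Zariski-open set where it is smooth and strictly positive, solve the Hermitian Monge--Amp\`ere equation of Tosatti--Weinkove (resp.\ Cherrier) to produce a family of metrics $\omega_t=\alpha+t\omega+i\partial\overline{\partial}u_t$ with prescribed volume form $\omega_t^n=c_t\,\omega^n$. Here the positivity $\int_X T_{ac}^n>0$ is exactly what keeps the total mass $\int_X\omega_t^n$ bounded below away from zero as $t\to 0$. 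Pairing $\omega_t^{n-1}$ against $\Omega$ and invoking a Khovanskii--Teissier/Cauchy--Schwarz type mixed-mass inequality, I would bound $\int_X\alpha\wedge\Omega$ from below by a positive multiple of this mass, forcing $\int_X\alpha\wedge\Omega>0$ and contradicting the previous paragraph.

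The hard part --- and the reason Boucksom's conjecture is delicate in its full generality --- is this last inequality. Every Stokes/integration-by-parts step needed to relate $\int_X\omega_t^{n-1}\wedge\Omega$ to $\int_X\alpha\wedge\Omega$ and to the volume $\int_X\omega_t^n$ generates torsion error terms assembled from $\partial\omega$, $\overline{\partial}\omega$ and $\partial\overline{\partial}\omega^l$, which on a general Hermitian manifold do not vanish and must be controlled uniformly in $t$ against the Monge--Amp\`ere estimates. Killing these terms is precisely the role of the hypothesis $\partial\overline{\partial}\omega^l=0$ for all $l$ used in the simplified proof below; absorbing them for an arbitrary background metric is the genuine obstruction, and it is here that the analytic input --- uniform a priori estimates that do not see the torsion --- must do all the work.
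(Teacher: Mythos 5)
The statement you are proving is labelled a \emph{conjecture} in the paper, and it is open in the generality you state it: the paper itself only establishes it under extra hypotheses on the background metric ($\partial\overline{\partial}\omega^l=0$ for all $l$ in Theorem \ref{Wang}, or $\partial\overline{\partial}\omega=0$ plus analytic singularities of $T$ in Theorem \ref{main theorem}). Your proposal is, in substance, an honest reconstruction of the strategy behind the paper's conditional proof in Section 3 --- Lamari duality, Demailly regularization preserving $\int_X T^n_{ac}$ via Fatou, Tosatti--Weinkove Monge--Amp\`ere solutions, and the Cauchy--Schwarz mixed-mass inequality (Lemma \ref{key lemma}) --- and your final paragraph correctly concedes that the torsion terms are the genuine obstruction. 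So as a proof of the conjecture it has an irreparable gap, which you yourself name; what remains to assess is whether your conditional skeleton would work, and there it diverges from the paper in ways that matter.

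Two concrete problems. First, your weak-limit step is a trap: after extracting $\Omega=\lim\Omega_\delta$ you propose to pair $\omega_t^{n-1}$ against $\Omega$, but $\Omega$ is merely a positive $\partial\overline{\partial}$-closed $(n-1,n-1)$-\emph{current}, and the product of the $(n-1)$-st power of a Monge--Amp\`ere metric with such a current is not defined. The paper never takes this limit; it keeps the destabilizers as smooth Gauduchon metrics $\omega_{k,m}$ (Lamari's criterion, Theorem \ref{la-lemma}, is stated against strictly positive $\partial\overline{\partial}$-closed \emph{forms}) and derives the contradiction at finite parameters $k,m$ from the three estimates (\ref{estimate of Ckm})--(\ref{estimate of the last term}). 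Second, your normalization $\omega_t^n=c_t\,\omega^n$ decouples the equation from the destabilizer; the whole point of the paper's choice
\begin{align*}
(\theta_k+\varepsilon_k\mu^*_k\omega+\tfrac{1}{m}\widetilde{\omega}_{k}+dd^c\varphi_{k,m})^n=C_{k,m}\,\omega^{n-1}_{k,m}\wedge\widetilde{\omega}_k
\end{align*}
is that the right-hand side makes Lemma \ref{key lemma} link $\int\alpha_{k,m}\wedge\omega_{k,m}^{n-1}$ (small by Lamari) with $\int\alpha_{k,m}^{n-1}\wedge\widetilde{\omega}_k$ (bounded), against $C_{k,m}\geq C>0$ (bounded below via Tosatti--Weinkove's formula for the constant, which is exactly where $\partial\overline{\partial}\omega^l=0$ enters). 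With right-hand side $c_t\omega^n$ no contradiction falls out. Finally, you omit a step the paper cannot do without: regularization produces currents with analytic singularities, so one must pass to modifications $\mu_k:X_k\to X$, and the metric hypothesis must be transported upstairs --- this is the paper's Lemma \ref{blow-up}, constructing Gauduchon metrics on $X_k$ with $\partial\overline{\partial}\Omega^l=0$ close to $\mu_k^*\omega$; your plan to solve the equation directly ``on the Zariski-open set where the current is smooth'' replaces a compact-manifold problem with a degenerate one for which the quoted existence theorem does not apply.
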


\begin{conj}[Demailly-P\u{a}un's conjecture, cf. \cite{DP04}]\label{DP conj}
	Let $X$ be a compact complex manifold of complex dimension $n$ and $\alpha\in H^{1,1}_{\partial\overline{\partial}}(X,\mathbb{R})$ be  a nef class, if $\int_X\alpha^n>0$, then $\alpha$ is big and thus  $X$ is in the Fujiki class $\mathcal{C}$.
\end{conj}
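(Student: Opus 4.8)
The plan is to produce inside the nef class $\alpha$ a genuine K\"{a}hler current, i.e.\ a closed positive $(1,1)$-current $T=\theta+i\partial\overline{\partial}\varphi$ with $T\geq\delta\omega$ for some $\delta>0$ and a fixed Hermitian metric $\omega$. By definition this is exactly the assertion that $\alpha$ is big, and the existence of a K\"{a}hler current on $X$ then places $X$ in the Fujiki class $\mathcal{C}$ by the characterization of Demailly--P\u{a}un recalled above (\cite{DP04}). So both conclusions follow at once, and the entire content is to upgrade nefness together with the numerical positivity $\int_X\alpha^n>0$ into strict positivity of some representative.

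First I would fix a smooth closed real $(1,1)$-form $\theta$ representing $\alpha$ and use nefness to obtain, for each $\epsilon>0$, a smooth $\psi_\epsilon$ with $\theta_\epsilon:=\theta+\epsilon\omega+i\partial\overline{\partial}\psi_\epsilon>0$. The engine is the solvability of the complex Monge--Amp\`ere equation on a compact Hermitian manifold, due to Cherrier, Guan--Li, Tosatti--Weinkove and Dinew--Kolodziej: for a smooth $f\geq0$ I would solve
\[
(\theta_\epsilon+i\partial\overline{\partial}u_\epsilon)^n=c_\epsilon\,f\,\omega^n,\qquad \omega_\epsilon:=\theta_\epsilon+i\partial\overline{\partial}u_\epsilon>0,
\]
with the constant $c_\epsilon$ prescribed by the equation. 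Choosing $f$ so that its mass concentrates near a single point implements the mass-concentration heuristic of Demailly--P\u{a}un and Chiose: if the Monge--Amp\`ere mass can be forced to accumulate at one point, the weak limit of the $\omega_\epsilon$ will dominate a fixed multiple of $\omega$ there, producing the desired K\"{a}hler current in $\{\alpha\}$.

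The decisive step is a uniform a priori estimate. Arguing by contradiction, I would assume $\alpha$ is not big and attempt to show that the available mass then collapses, contradicting $\int_X\alpha^n>0$. The quantities to play against each other are the top mass $\int_X\omega_\epsilon^n$ and the mixed masses $\int_X\omega_\epsilon^{\,k}\wedge\omega^{\,n-k}$; on a K\"{a}hler manifold these are all cohomological, and a mixed Hodge-type (reverse Cauchy--Schwarz) inequality converts $\int_X\alpha^n>0$ into a uniform positive lower bound for the concentrated mass, forcing strict positivity in the limit. The task is to run the same comparison on a general Hermitian manifold.

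\emph{The main obstacle.} Every such comparison --- even the basic identity relating $\int_X\omega_\epsilon^n$ to $\int_X\theta_\epsilon^n$ and ultimately to $\int_X\alpha^n$ --- is obtained by integration by parts, and on a general compact Hermitian manifold Stokes' theorem leaves torsion terms of the shape $\int_X u_\epsilon\,\partial\overline{\partial}(\omega^{\,k})\wedge(\cdots)$ and $\int_X \partial u_\epsilon\wedge\overline{\partial}u_\epsilon\wedge\partial\overline{\partial}(\cdots)$ which are neither zero nor manifestly bounded by the mass one is trying to estimate. Precisely these terms vanish when $\omega$ can be chosen with $\partial\overline{\partial}\omega^l=0$, which is why the statement is presently accessible only under such special-metric hypotheses and in the K\"{a}hler case. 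To prove it as worded, for an arbitrary compact complex manifold with no privileged metric available, one must bound these torsion integrals using only nefness, the (still to be established) uniform estimates on $u_\epsilon$, and the single numerical input $\int_X\alpha^n>0$. I expect this torsion control to be the crux and the principal point where the argument must genuinely go beyond the existing special cases.
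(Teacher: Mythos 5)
You have not proved the statement, and you should be aware that as it stands in the paper it is not a theorem at all: it is an open conjecture (Demailly--P\u{a}un's conjecture), for which the paper offers no proof, only a survey of partial solutions --- Demailly--P\u{a}un's own proof when $X$ is K\"{a}hler \cite{DP04}, Chiose's proof under the hypothesis $\partial\overline{\partial}\omega^l=0$ for all $l$ \cite{Chi16}, and Nguyen's proof under $\partial\overline{\partial}\omega=0$ with $\alpha$ semi-positive \cite{N16}. Your sketch correctly reproduces the strategy common to all of these (Lamari-type duality, solving the Hermitian Monge--Amp\`ere equation of Tosatti--Weinkove, playing the top mass $\int_X\omega_\epsilon^n$ against mixed masses via a Cauchy--Schwarz-type inequality, as in Lemma \ref{key lemma} of this paper), and you correctly diagnose where it breaks: on an arbitrary Hermitian manifold the mixed masses are not cohomological, and the torsion integrals produced by Stokes are exactly what the hypotheses $\partial\overline{\partial}\omega^l=0$ (which make Tosatti--Weinkove's constant $C$ computable, cf.\ Theorem \ref{Monge-Ampere equation}, and make integrals like \eqref{estimate of Ckm}--\eqref{estimate of the last term} well-defined on classes) are designed to kill.

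The genuine gap is therefore your final step: you write that one ``must bound these torsion integrals using only nefness \dots{} and the single numerical input $\int_X\alpha^n>0$,'' and you offer no mechanism for doing so. No such mechanism is currently known --- that is precisely the content of the conjecture. Note also a smaller foundational issue your sketch glosses over: without $\partial\overline{\partial}$-closedness assumptions on $\omega$, even the quantity $\int_X\alpha^n$ is not well-defined independently of the chosen representative $\theta\in\alpha$, since $\int_X(\theta+i\partial\overline{\partial}\psi)^n\neq\int_X\theta^n$ in general; so before any a priori estimate can be run, the statement itself needs the kind of metric normalization your argument was hoping to avoid. Your proposal is best read as an accurate account of the state of the art and of why the special-metric cases (including the two theorems actually proved in this paper, Theorems \ref{Wang} and \ref{main theorem}) go through, rather than as a proof of the conjecture.
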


	Recently, there has been progress of the above conjectures.
	
	If the class  $\{T\}$ is assumed to be integral (i.e. $T$ is a curvature current associated to a singular metric of  a pseudo-effective holomorphic line bundle), the conjecture \ref{conj} is proved by Popovici \cite{Pop08} by using singular Holomorphic Morse inequalities of Bonavero \cite{Bon98} and an improved regularization of currents with mass control. But in general case, there is no holomorphic Morse type inequalities for real  transcendental  $(1,1)$-class.
	For this reason, Boucksom-Demailly-P\u{a}un-Peternell posed the following 
	\begin{conj}[Boucksom-Demailly-P\u{a}un-Peternell's conjecture, cf. \cite{BDPP}] 	Let $X$ be a compact complex manifold of complex dimension $n$. Let $\alpha$ be  a  closed real $(1,1)$-form and let $X(\alpha,\leq 1) $ be the set where $\alpha$ non degenerate and has at most one negative eigenvalue. If $\int_{X(\alpha,\leq 1)}\alpha^n>0$, then the class $\{\alpha\}\in H^{1,1}_{\partial\overline{\partial}}(X,\mathbb{R})$ is big and 
		\begin{align*}
		vol(\{\alpha\}):=\sup_{0<T\in \{\alpha\}}\int_{X\setminus Sing(T)}T^n\geq \int_{X(\alpha,\leq 1)}\alpha^n,
		\end{align*}
		where $T$ ranges over all K\"{a}hler currents $T\in \{\alpha\}$ with analytic singularities.
	\end{conj}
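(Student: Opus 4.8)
The plan is to prove bigness by producing, inside the class $\{\alpha\}$, a closed positive current which is strictly positive (a K\"{a}hler current) and whose Monge-Amp\`{e}re mass is bounded below by $\int_{X(\alpha,\leq 1)}\alpha^n$. The natural engine for this is a \emph{mass concentration} argument of the type Demailly used to establish the holomorphic Morse inequalities, transported to the transcendental setting by replacing high tensor powers of a line bundle and their holomorphic sections with solutions of complex Monge-Amp\`{e}re equations.

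Concretely, I would first fix a Hermitian metric $\omega$ (a K\"{a}hler metric when one is available) and, using Demailly's regularization of closed $(1,1)$-currents, reduce to representatives with analytic singularities so that the relevant intersection numbers are well defined. Next, for small $\epsilon>0$ set $\alpha_\epsilon:=\alpha+\epsilon\omega$ and seek quasi-plurisubharmonic potentials $\varphi_\epsilon$ solving the degenerate complex Monge-Amp\`{e}re equations
\begin{align*}
(\alpha_\epsilon+i\partial\overline{\partial}\varphi_\epsilon)^n=\mu_\epsilon,
\end{align*}
where the right-hand measures $\mu_\epsilon$ are chosen so as to concentrate their mass on a shrinking neighborhood of the good set $X(\alpha,\leq 1)$, while keeping the total mass at least $\int_{X(\alpha,\leq 1)}\alpha^n-o(1)$. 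The solvability and uniform control of these equations is where Yau's theorem and its Hermitian generalizations due to Cherrier and Tosatti-Weinkove enter.

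Then I would let $\epsilon\to 0$ and extract a weak limit $T$ of the currents $\alpha_\epsilon+i\partial\overline{\partial}\varphi_\epsilon$. Since $\epsilon\omega\to 0$, the limit should lie in $\{\alpha\}$; the concentration of $\mu_\epsilon$ forces the eigenvalue configuration to be favorable along $X(\alpha,\leq 1)$, and lower semicontinuity of the Monge-Amp\`{e}re mass should give $\int_{X\setminus\mathrm{Sing}(T)}T^n\geq\int_{X(\alpha,\leq 1)}\alpha^n$. Checking that $T$ is genuinely a K\"{a}hler current then yields bigness, and the inequality $vol(\{\alpha\})\geq\int_{X(\alpha,\leq 1)}\alpha^n$ follows once one knows that the volume is computed by a supremum over currents with analytic singularities, which itself rests on Boucksom's orthogonality estimate together with the regularization theorem.

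The hard part, and the reason the statement is still a conjecture in general, is the interaction of two difficulties. In the non-K\"{a}hler case there is no $\partial\overline{\partial}$-lemma, so one loses the tight correspondence between $d$-closedness and the Bott-Chern cohomology that is needed to pin down the cohomology class of the limiting current $T$; and the transcendental nature of $\{\alpha\}$ removes holomorphic sections, so the concentration must be produced purely analytically, which demands uniform a priori estimates for the degenerating Monge-Amp\`{e}re equations with singular right-hand sides. I expect the single most delicate step to be \emph{simultaneously} concentrating the Monge-Amp\`{e}re mass on $X(\alpha,\leq 1)$ and controlling the cohomology class of the weak limit, since these two requirements tend to work against each other; this is exactly the point at which the known arguments go through only under additional hypotheses, such as $\partial\overline{\partial}\omega^l=0$ or analytic singularities, as in the results surveyed above.
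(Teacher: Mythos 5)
This statement is presented in the paper as an open conjecture: the paper supplies no proof of it, and only surveys partial results that hold under additional hypotheses (integrality of the class, nefness or semi-positivity, analytic singularities, or a metric with $\partial\overline{\partial}\omega^l=0$). Your text is therefore not being measured against a proof in the paper, and on its own terms it is a strategy outline rather than a proof --- you say as much in your final paragraph. The honest self-assessment is welcome, but the gaps you wave at are not technical loose ends; they are the entire content of the conjecture.

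Concretely: (1) the choice of measures $\mu_\epsilon$ ``concentrating mass on $X(\alpha,\leq 1)$ while keeping total mass at least $\int_{X(\alpha,\leq 1)}\alpha^n-o(1)$'' is exactly the step that, in the line-bundle case, is supplied by holomorphic Morse inequalities via peak sections of $L^k$; no transcendental substitute is known, and the paper explicitly notes that the absence of Morse-type inequalities for real $(1,1)$-classes is the reason BDPP posed this conjecture in the first place. (2) The Monge-Amp\`ere mass $\int_{X\setminus\mathrm{Sing}(T)}T^n$ is \emph{not} lower semicontinuous under weak convergence of currents, so the inequality you want for the weak limit $T$ does not ``follow from lower semicontinuity''; in Demailly--P\u{a}un's K\"ahler argument this is precisely where delicate local mass estimates and the Calabi--Yau theorem are needed. (3) A weak limit of the currents $\alpha_\epsilon+i\partial\overline{\partial}\varphi_\epsilon$ is a priori only positive; to get a K\"ahler current you need a uniform bound $\geq\delta\omega$ with $\delta>0$ independent of $\epsilon$, which your construction does not produce. (4) On a general compact Hermitian manifold the total Monge-Amp\`ere mass $\int_X(\alpha_\epsilon+i\partial\overline{\partial}\varphi_\epsilon)^n$ is not a cohomological invariant --- the Tosatti--Weinkove normalization constant $C$ is computable only under $\partial\overline{\partial}\omega^k=0$ --- so even the starting point of the mass count is unavailable without the auxiliary hypotheses the paper imposes in its Theorems \ref{Ngu} and \ref{Wang}. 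If you want to contribute something provable here, the realistic targets are exactly those partial cases: adapt the Chiose/Popovici inequality of Lemma \ref{key lemma} under a metric hypothesis, as the paper does, rather than attempting the full concentration scheme.
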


 If $X $ is assumed to be compact K\"{a}hler,  then 
\begin{itemize}
	\item Conjecture \ref{DP conj} is proved by Demailly-P\u{a}un in \cite{DP04}. 
	\item Conjecture \ref{conj}  is proved by Boucksom in  \cite{Bou02}, i.e.   $\alpha$ is big if and only if vol$(\alpha)>0$.  It is worth to mention that if $\alpha$ is nef, then vol$(\alpha)=\int_X\alpha^n$, thus Boucksom's proof of Conjecture \ref{conj} yields a proof of Conjecture \ref{DP conj}.  
	\end{itemize}

 If $(X,\omega)$ is only assumed to be compact Hermitian, then 
 \begin{itemize}
\item Under the assumption that $\partial\overline{\partial}\omega^l=0$ for all $l$, Chiose gives a  proof of Conjecture \ref{DP conj} in \cite{Chi16}  by combining Lamari's criterion \cite{Lam99a} and Tosatti-Weinkove's solution to complex Monge-Amp\`{e}re equation on compact Hermitian manifolds \cite{TW10}. See also \cite{Tos16} for even more simplified proof in this case. Note that
there always exists such a metric on any compact complex surface.

\item  Under the assumption that $\partial\overline{\partial}\omega=0$, and the class $\alpha$ is semi positive, Nguyen proved both Conjecture \ref{conj} and  Conjecture \ref{DP conj} in \cite{N16} by combining Lamari's criterion and analysis of  degenerate complex Monge-Amp\`{e}re equations.
\item Under the assumption that  $\partial\overline{\partial}\omega^l=0$ for all $l$, the author gives a proof of  Conjecture \ref{conj}  by modifying Chiose's arguments. 
\end{itemize}

All the above positive results are called \textbf{Grauert-Reimenschneider type criterion}. For the convinient of the reader, we state theorems of Nguyen and the author as follows.

\begin{thm}[\cite{N16}]\label{Ngu} Suppose that  $X$ is a compact complex manifold equipped with a pluriclosed  metric $\omega$,  i.e. $\partial\overline{\partial}\omega=0$, $\{\beta\}$ is a semi-positive class of type $(1,1)$, such that $\int_X\beta^n>0$, then $\{\beta\}$ contains a  K\"{a}hler current. Thus this gives a partial solution to the Conjecture \ref{DP conj}.
\end{thm}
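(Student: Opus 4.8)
The plan is to combine Lamari's duality criterion with a degenerate complex Monge-Amp\`{e}re equation, using the pluriclosedness $\partial\overline{\partial}\omega=0$ to keep the relevant masses under control. First I would reformulate the conclusion: producing a K\"{a}hler current in $\{\beta\}$ amounts to finding $\varepsilon>0$ and a quasi-plurisubharmonic $\varphi$ with $\beta+i\partial\overline{\partial}\varphi\geq\varepsilon\omega$. By Lamari's duality lemma, for a fixed $\varepsilon$ such a $\varphi$ exists if and only if
\[
\int_X(\beta-\varepsilon\omega)\wedge\gamma^{n-1}\geq 0
\]
for every Gauduchon metric $\gamma$ (that is, $\gamma>0$ with $\partial\overline{\partial}\gamma^{n-1}=0$). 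Thus it suffices to produce one uniform $\varepsilon>0$ for which this holds, and I would argue by contradiction: if no such $\varepsilon$ works, then there is a sequence of Gauduchon metrics $\gamma_k$, normalized by $\int_X\omega\wedge\gamma_k^{n-1}=1$, with $\int_X\beta\wedge\gamma_k^{n-1}\to 0$.

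Next I would feed this into a Monge-Amp\`{e}re construction. Since $\{\beta\}$ is semi-positive, fix a smooth representative $\beta\geq 0$; for small $\delta>0$ the form $\beta+\delta\omega$ is a genuine Hermitian metric, so by the Tosatti-Weinkove solution of the complex Monge-Amp\`{e}re equation on compact Hermitian manifolds there are a constant $c_{k,\delta}>0$ and a function $u_{k,\delta}$ with
\[
\widetilde{\omega}_{k,\delta}:=\beta+\delta\omega+i\partial\overline{\partial}u_{k,\delta}>0,\qquad \widetilde{\omega}_{k,\delta}^{\,n}=c_{k,\delta}\,\gamma_k^n.
\]
Here the pluriclosed hypothesis enters twice. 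On one hand, because $\beta$ is $d$-closed, the cross terms $\int_X\beta^{n-j}\wedge(i\partial\overline{\partial}u_{k,\delta})^{j}$ with $j\geq 1$ vanish after integration by parts, and $\partial\overline{\partial}\omega=0$ removes those cross terms meeting $\omega$ linearly, leaving only corrections of higher order in $\delta$; in the degenerate limit $\delta\to 0$ this pins $c_{k,\delta}\int_X\gamma_k^n$ from below by $\int_X\beta^n>0$. On the other hand, $i\partial\overline{\partial}u_{k,\delta}$ is $d$-exact and $\gamma_k$ is Gauduchon, so $\int_X i\partial\overline{\partial}u_{k,\delta}\wedge\gamma_k^{n-1}=0$ and hence, exactly,
\[
\int_X\widetilde{\omega}_{k,\delta}\wedge\gamma_k^{n-1}=\int_X\beta\wedge\gamma_k^{n-1}+\delta\int_X\omega\wedge\gamma_k^{n-1}=\int_X\beta\wedge\gamma_k^{n-1}+\delta.
\]

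Finally I would try to extract a contradiction by a pointwise AM-GM estimate. Diagonalizing $\widetilde{\omega}_{k,\delta}$ with respect to $\gamma_k$ and using $\widetilde{\omega}_{k,\delta}^{\,n}=c_{k,\delta}\gamma_k^n$ gives $\widetilde{\omega}_{k,\delta}\wedge\gamma_k^{n-1}\geq c_{k,\delta}^{1/n}\gamma_k^n$, whence $\int_X\widetilde{\omega}_{k,\delta}^{\,n}\leq c_{k,\delta}^{(n-1)/n}\int_X\widetilde{\omega}_{k,\delta}\wedge\gamma_k^{n-1}$. Combined with the two displays above this yields, for $\delta$ small,
\[
\int_X\beta^n\;\lesssim\; c_{k,\delta}^{(n-1)/n}\Bigl(\int_X\beta\wedge\gamma_k^{n-1}+\delta\Bigr),
\]
and since the parenthesis tends to $0$ as $\delta\to 0$ and $k\to\infty$, a contradiction follows \emph{provided the Monge-Amp\`{e}re constants $c_{k,\delta}$ stay bounded}.

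The main obstacle is precisely this last proviso: the AM-GM bound controls everything only through the product $c_{k,\delta}\int_X\gamma_k^n$, so one must rule out the degeneration in which $c_{k,\delta}$ blows up while $\int_X\gamma_k^n$ collapses (the Gauduchon metrics concentrating away from the positivity of $\beta$). This is where genuine analysis of the degenerate (as $\delta\to 0$) complex Monge-Amp\`{e}re equation is needed: uniform a priori estimates on $u_{k,\delta}$ giving compactness of the solutions and a lower bound on the limiting volume. This analytic input, rather than the comparatively soft Lamari-Monge-Amp\`{e}re scheme above, is the technical heart of Nguyen's theorem.
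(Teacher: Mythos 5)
First, note that the paper does not actually prove Theorem \ref{Ngu}: it is quoted from \cite{N16} and used as a black box in the proof of Theorem \ref{main theorem}. So there is no in-paper argument to compare against; your proposal has to be judged as a reconstruction of Nguyen's proof, whose strategy the introduction describes as ``Lamari's criterion plus analysis of degenerate complex Monge-Amp\`ere equations.'' Your overall scheme (Lamari duality, a contradiction sequence of Gauduchon metrics $\gamma_k$, Tosatti--Weinkove with degenerating background $\beta+\delta\omega$) is indeed that strategy.

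However, there are two genuine gaps. The first is the claim that, thanks to $d\beta=0$ and $\partial\overline{\partial}\omega=0$, the expansion of $\int_X(\beta+\delta\omega+i\partial\overline{\partial}u_{k,\delta})^n$ leaves ``only corrections of higher order in $\delta$.'' Integration by parts kills the cross terms $\int_X\beta^{a}\wedge\omega^{b}\wedge(i\partial\overline{\partial}u)^{c}$ only for $b\le 1$: for $b\ge 2$ one is left with $\int_X u\,\beta^{a}\wedge i\partial\overline{\partial}(\omega^{b})\wedge(i\partial\overline{\partial}u)^{c-1}$, and $i\partial\overline{\partial}(\omega^{b})$ contains the term $b(b-1)\,\omega^{b-2}\wedge i\partial\omega\wedge\overline{\partial}\omega$, which does not vanish when only $\partial\overline{\partial}\omega=0$ is assumed. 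These surviving terms carry a factor $\delta^{b}$ but also uncontrolled powers of $i\partial\overline{\partial}u_{k,\delta}$, so they are not \emph{a priori} small; this is precisely why Chiose \cite{Chi16} and the present paper assume $\partial\overline{\partial}\omega^{l}=0$ for all $l$ (which makes every cross term vanish identically), and why Nguyen must instead prove hard uniform estimates for the degenerate equation. The second gap you acknowledge yourself: the AM--GM step only controls $c_{k,\delta}^{1/n}\int_X\gamma_k^n$, and nothing in the normalization $\int_X\omega\wedge\gamma_k^{n-1}=1$ prevents $\int_X\gamma_k^n\to0$ while $c_{k,\delta}\to\infty$; deferring this to ``uniform a priori estimates'' is deferring the entire theorem. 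A cleaner way around the second gap (though not the first) is the device used in Section 3 of the paper: solve the Monge--Amp\`ere equation with right-hand side $C\,\gamma_k^{n-1}\wedge\widetilde{\omega}$ rather than $C\,\gamma_k^{n}$ and invoke the Cauchy--Schwarz inequality of Lemma \ref{key lemma} (due to Popovici and Nguyen), which yields the contradiction directly from $\int_X\alpha_{k,m}\wedge\gamma_k^{n-1}\to0$ together with the boundedness of $\int_X\alpha_{k,m}^{n-1}\wedge\widetilde{\omega}$, with no lower bound on $\int_X\gamma_k^n$ required.
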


\begin{thm}[\cite{W16}]\label{Wang} Suppose that $X$ is a compact complex manifold equipped with a Hermitian metric $\omega$ satisfying $\partial\overline{\partial}\omega^k=0$ for all $k$, the volume vol$(\alpha)$ of a pseudo-effective class $\alpha\in H^{1,1}_{\partial\overline{\partial}}(X,\mathbb{R})$ is defined and proved to be finite. Then it is proved that if $\alpha$ is pseudo-effective and   vol$(\alpha)>0$, then  $\alpha$ is big, thus $X$ is in the Fujiki class $\mathcal{C}$, and finally K\"{a}hler. This gives a partial solution to the Conjecture \ref{conj}.
\end{thm}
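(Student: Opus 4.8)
The plan is to argue by contradiction, using Lamari's duality \cite{Lam99a} to translate bigness of $\alpha$ into a positivity that can be tested against Gauduchon metrics, and then to contradict it by a mixed Monge-Amp\`ere inequality built on Tosatti-Weinkove \cite{TW10}. Fix a smooth representative, still denoted $\alpha$, of the class. Recall that $\alpha$ is big exactly when it contains a K\"ahler current, i.e. when there is $\psi$ with $\alpha+i\partial\overline{\partial}\psi\geq\epsilon\omega$ for some $\epsilon>0$. Assuming $\alpha$ is \emph{not} big, Lamari's criterion produces, for every $\epsilon>0$, a Gauduchon metric $\gamma_\epsilon$ (a positive $(n-1,n-1)$-form with $\partial\overline{\partial}\gamma_\epsilon=0$) such that $\int_X(\alpha-\epsilon\omega)\wedge\gamma_\epsilon<0$. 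By Michelsohn's theorem I may write $\gamma_\epsilon=\Omega_\epsilon^{n-1}$ for a Hermitian metric $\Omega_\epsilon$, normalized by $\int_X\omega\wedge\Omega_\epsilon^{n-1}=1$; a H\"older/arithmetic-geometric estimate against this normalization keeps $\int_X\Omega_\epsilon^n$ bounded below by a fixed positive constant. The whole difficulty is then to extract from $\mathrm{vol}(\alpha)>0$ a \emph{uniform} lower bound on $\int_X\alpha\wedge\Omega_\epsilon^{n-1}$, contradicting Lamari's inequality as $\epsilon\to0$.

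I would first treat the case where $\alpha$ is nef, whose analytic core runs as follows. Choosing smooth positive representatives $\beta_\epsilon$ of $\alpha+\epsilon\omega$ and solving, via \cite{TW10}, the Monge-Amp\`ere equation $(\beta_\epsilon+i\partial\overline{\partial}\varphi_\epsilon)^n=c_\epsilon\,\Omega_\epsilon^n$ with $\widetilde\alpha_\epsilon:=\beta_\epsilon+i\partial\overline{\partial}\varphi_\epsilon>0$ and $c_\epsilon>0$, the pointwise arithmetic-geometric inequality $\widetilde\alpha_\epsilon\wedge\Omega_\epsilon^{n-1}\geq(\widetilde\alpha_\epsilon^n)^{1/n}(\Omega_\epsilon^n)^{(n-1)/n}$ together with the equation yields the global bound
\[
\int_X\widetilde\alpha_\epsilon\wedge\Omega_\epsilon^{n-1}\;\geq\;\Bigl(\int_X\widetilde\alpha_\epsilon^n\Bigr)^{1/n}\Bigl(\int_X\Omega_\epsilon^n\Bigr)^{(n-1)/n}.
\]
Since $\widetilde\alpha_\epsilon$ is $d$-cohomologous to the closed form $\beta_\epsilon$ and $\Omega_\epsilon^{n-1}$ is $\partial\overline{\partial}$-closed, Stokes' theorem turns the left side into $\int_X\alpha\wedge\Omega_\epsilon^{n-1}+\epsilon$, which is $<2\epsilon$ by Lamari's inequality, while the mass becomes $\int_X\widetilde\alpha_\epsilon^n=\int_X\beta_\epsilon^n\to\int_X\alpha^n>0$; hence the right side stays bounded below by a positive constant, a contradiction as $\epsilon\to0$. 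Here the hypothesis $\partial\overline{\partial}\omega^l=0$ for all $l$ is what makes $\mathrm{vol}(\alpha)$ finite and cohomologically stable: the mass estimates for the non-pluripolar products $\int_X T_{ac}^n$ are obtained by integrating against the powers $\omega^k$ and discarding the $\partial\overline{\partial}$-boundary terms, which is legitimate precisely because $\partial\overline{\partial}\omega^k=0$.

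The main obstacle is that $\alpha$ is only \emph{pseudo-effective}, so it possesses no smooth positive representative and the Monge-Amp\`ere machinery cannot be applied to it directly. I would resolve this by Demailly regularization: choose a closed positive current $T\in\alpha$ with $\int_X T_{ac}^n\geq\tfrac12\mathrm{vol}(\alpha)>0$, approximate it by currents $T_\delta$ with analytic singularities and vanishing loss $T_\delta\geq-\lambda_\delta\omega$, $\lambda_\delta\downarrow0$, resolve the singularities by a modification $\mu\colon\widetilde X\to X$ so that $\mu^*T_\delta=[E_\delta]+\beta_\delta$ with $\beta_\delta$ smooth and nef, and carry out the estimate of the previous paragraph on $\widetilde X$ against $\mu^*\Omega_\epsilon^{n-1}$ (which remains $\partial\overline{\partial}$-closed, as does $(\mu^*\omega)^k$). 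The surviving self-intersection $\int_{\widetilde X}\beta_\delta^n$ is bounded below by $\int_X T_{ac}^n$ up to $o(1)$, supplying the uniform lower bound on the Monge-Amp\`ere constant that replaced $\int_X\alpha^n$ in the nef case. The delicate point, requiring the most care, is the bookkeeping of masses through the regularization and the blow-up: one must ensure that the exceptional and divisorial contributions do not absorb the volume, while preserving the normalization of $\Omega_\epsilon$. Once $\alpha$ is shown to be big it contains a K\"ahler current, so $X$ lies in the Fujiki class $\mathcal{C}$ by \cite{DP04}; the final upgrade to ``$X$ is K\"ahler'' uses that a manifold of class $\mathcal{C}$ carrying a metric with $\partial\overline{\partial}\omega^l=0$ for all $l$ is necessarily K\"ahler.
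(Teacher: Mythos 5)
Your overall architecture --- regularize the current, resolve the singularities, test bigness via Lamari's duality against Gauduchon metrics, and derive a contradiction from a Monge--Amp\`ere estimate --- matches the paper's. But the analytic core of your argument has a genuine gap. You solve $(\beta_\epsilon+i\partial\overline{\partial}\varphi_\epsilon)^n=c_\epsilon\,\Omega_\epsilon^n$ and conclude via AM--GM that $\int_X\widetilde\alpha_\epsilon\wedge\Omega_\epsilon^{n-1}\geq\bigl(\int_X\widetilde\alpha_\epsilon^n\bigr)^{1/n}\bigl(\int_X\Omega_\epsilon^n\bigr)^{(n-1)/n}$; to get a contradiction you then need $\int_X\Omega_\epsilon^n$ bounded below by a positive constant uniformly in $\epsilon$. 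This does \emph{not} follow from the normalization $\int_X\omega\wedge\Omega_\epsilon^{n-1}=1$: the pointwise inequality $\omega\wedge\Omega^{n-1}\gtrsim(\omega^n)^{1/n}(\Omega^n)^{(n-1)/n}$ combined with H\"older only bounds such quantities from above, and a family of Gauduchon metrics can degenerate (stretch in some directions, collapse in others) so that $\int_X\omega\wedge\Omega_\epsilon^{n-1}$ stays equal to $1$ while $\int_X\Omega_\epsilon^n\to0$. The paper avoids exactly this by putting the \emph{mixed} form on the right-hand side, $\alpha_{k,m}^n=C_{k,m}\,\omega_{k,m}^{n-1}\wedge\widetilde\omega_k$, and replacing AM--GM by the Cauchy--Schwarz-type inequality of Lemma \ref{key lemma}, whose right-hand side evaluates to $C_{k,m}/n$ using only the normalization $\int\widetilde\omega_k\wedge\omega_{k,m}^{n-1}=1$; the two left-hand factors are then controlled by $3/m$ and by a uniform constant $M$ (the latter via $\partial\overline{\partial}\widetilde\omega_k^l=0$), which yields the contradiction. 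Your scheme needs this replacement to close.

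Two smaller points. After resolution the residual part $\beta_\delta$ of $\mu^*T_\delta$ is smooth with $\beta_\delta\geq-\lambda_\delta\mu^*\omega$, but it is \emph{not} nef, since $\mu^*\omega$ degenerates along the exceptional divisor; so you cannot quote the nef case verbatim, and the paper instead adds $\varepsilon_k\mu_k^*\omega+\frac1m\widetilde\omega_k$ before solving the equation. Moreover, the test metrics must be produced by Lamari's criterion \emph{on the modification}, applied to the class of the residual part: the pulled-back form $\mu^*\Omega_\epsilon^{n-1}$ is degenerate on the exceptional set and cannot serve as (part of) the right-hand side of a Tosatti--Weinkove equation. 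This is why the paper first constructs on each $X_k$ a Hermitian metric $\widetilde\omega_k$ with $\partial\overline{\partial}\widetilde\omega_k^l=0$ close to $\mu_k^*\omega$ (Lemma \ref{blow-up}) and runs the entire contradiction argument upstairs, only at the very end pushing the resulting K\"ahler current back down to $X$.
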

\begin{rmk}
	It is worth to mention that Theorem \ref{Wang}  drops the assumption of nefness of the class $\alpha$, which is an  improvement of Chiose's result, since in general the nef cone of a compact Hermitian manifold is a subset of the pseudo effective cone.
\end{rmk}

In this paper, absorbing new techniques towards the above mentioned conjectures, we first give a simplified  proof of Theorem \ref{Wang}.  Secondly, as an easy observation,  we point out that  Nguyen's results can be generalized to the following

\begin{thm}\label{main theorem}
Let $(X,\omega)$ be a compact complex manifold of complex dimension $n$, $\omega$ be a Hermitian metric with $\partial\overline{\partial}\omega=0$,  and $T$ be a closed positive current with analytic singularities (see Definition \ref{APC}), if $\int_X T^n_{ac}>0$, then $\{T\}$ is big. Thus $X$ is in the Fujiki class $\mathcal{C}$, and finally K\"{a}hler. Where $T_{ac}$ is the absolutely continuous part in the Lebesgue decomposition of $T$ with respect to the Lebesgue measure on $X$.
\end{thm}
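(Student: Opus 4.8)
The plan is to reduce the statement to Nguyen's theorem (Theorem \ref{Ngu}) by resolving the analytic singularities of $T$, so that on a suitable modification the current becomes the sum of an effective divisor and a smooth semi-positive form that carries all the positive Monge-Amp\`ere mass. First I would apply Hironaka's resolution to the ideal sheaf cutting out the analytic singularities of $T$. This produces a proper modification $\pi\colon \tilde X \to X$, a finite composition of blow-ups along smooth centers, such that
\[
\pi^* T = [E] + \theta,
\]
where $E$ is an effective $\mathbb{R}$-divisor with simple normal crossing support and $\theta := \pi^* T - [E]$ is smooth. Since $\pi$ is a biholomorphism away from the exceptional locus and $T\geq 0$, the smooth form $\theta$ coincides with the pullback of the positive current $T$ on a dense open set, hence $\theta\geq 0$ everywhere by continuity. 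Thus $\{\theta\}$ is a smooth semi-positive $(1,1)$-class on $\tilde X$.

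Next I would match the relevant integrals. As $\pi$ is an isomorphism over the complement of a proper analytic subset (of Lebesgue measure zero), where $\theta$ coincides with the absolutely continuous part $T_{ac}$, and as $E$ is supported on a measure-zero set, we obtain
\[
\int_{\tilde X}\theta^n = \int_X T_{ac}^n > 0.
\]
To invoke Theorem \ref{Ngu} on $\tilde X$ I need a pluriclosed metric there, and this is exactly where the hypothesis $\partial\overline{\partial}\omega=0$ is used: the restriction of $\omega$ to any complex submanifold $Y\subset X$ is again pluriclosed, since $\partial\overline{\partial}(\omega|_Y)=(\partial\overline{\partial}\omega)|_Y=0$, so by the results of Fino-Tomassini on the behaviour of SKT metrics under blow-ups along (SKT) smooth centers, $\tilde X$ carries a pluriclosed metric $\tilde\omega$. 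Applying Theorem \ref{Ngu} to $(\tilde X,\tilde\omega)$ and the semi-positive class $\{\theta\}$ with $\int_{\tilde X}\theta^n>0$ then yields a K\"ahler current $S\in\{\theta\}$ with $S\geq\varepsilon\tilde\omega$ for some $\varepsilon>0$.

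Finally I would descend to $X$. The current $S+[E]$ lies in the class $\pi^*\{T\}$ and still satisfies $S+[E]\geq\varepsilon\tilde\omega$. Using a comparison $\pi^*\omega\leq C\tilde\omega$ for a constant $C>0$ together with the projection formula $\pi_*\pi^*\omega=\omega$, the pushforward $\pi_*(S+[E])$ is a closed positive current in $\{T\}$ satisfying $\pi_*(S+[E])\geq\tfrac{\varepsilon}{C}\omega$; that is, $\{T\}$ contains a K\"ahler current and is therefore big. By Demailly-P\u{a}un \cite{DP04}, $\tilde X$, and hence $X$ since the Fujiki class is a bimeromorphic invariant, lies in the Fujiki class $\mathcal{C}$. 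The upgrade from the Fujiki class to the K\"ahler property then follows, as in \cite{Chi16, N16}, from the pluriclosed condition combined with the existence of the K\"ahler current just produced.

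The main obstacle is the claim that $\tilde X$ admits a pluriclosed metric: unlike the K\"ahler condition, the pluriclosed (SKT) condition is not automatically preserved under blow-ups, so this is precisely the place where $\partial\overline{\partial}\omega=0$ must do real work, through the restriction observation for submanifolds and the blow-up results for SKT metrics. A secondary point to make precise is the final passage from the Fujiki class to K\"ahler, which again relies on the pluriclosed structure rather than on bigness alone.
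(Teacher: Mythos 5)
Your proposal is correct and follows essentially the same route as the paper: resolve the analytic singularities so that $\pi^*T=\theta+c[D]$ with $\theta$ smooth semi-positive, transfer the mass condition to $\int_{\widetilde X}\theta^n=\int_X T^n_{ac}>0$, produce a pluriclosed metric on $\widetilde X$, apply Nguyen's theorem to get a K\"ahler current in $\{\theta\}$, and push forward. The only difference is bibliographic: where you invoke Fino--Tomassini for the SKT metric on the blow-up, the paper uses its own Lemma \ref{blow-up} (the Demailly-type construction $\Omega=\pi^*\omega-\sum\varepsilon_j\widetilde u_j$ with $d$-closed correction terms), which amounts to the same construction.
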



\subsection*{Acknowledgements}
The author is  deeply grateful to  Prof. Xiangyu Zhou for his constant support and  encouragement over the years.

\section{Preliminaries}

\subsection{$\partial\overline{\partial}$-cohomology}\label{BC-cohomology}
Let $X$ be an arbitrary compact complex manifold of complex dimension $n$. Since the $\partial\overline{\partial}$-lemma does not hold in general, it is better to work with $\partial\overline{\partial}$-cohomology which is defined as
\begin{align}
H^{p,q}_{\partial\overline{\partial}}(X,\mathbb{C})=\big (\mathcal{C}^\infty(X,\Lambda^{p,q}T^*_X)\cap \ker d)/\partial\overline{\partial}\mathcal{C}^\infty(X,\Lambda^{p-1,q-1}T^*_X).\notag
\end{align}

By means of  the Fr\"{o}licher spectral sequence, one can see  that  $H^{p,q}_{\partial\overline{\partial}}(X,\mathbb{C})$ is  finite dimensional and can be computed either with spaces of smooth forms or with currents. In both cases, the quotient topology of $H^{p,q}_{\partial\overline{\partial}}(X,\mathbb{C})$ induced by the Fr\'{e}chet topology of smooth forms or by the weak topology of currents is Hausdorff, and the quotient map under this Hausdorff topology is continuous and open.

In this paper, we will just need the $(1,1)$-cohomology space $H^{1,1}_{\partial\overline{\partial}}(X,\mathbb{C})$.  The real structure on the space of $(1,1)$-smooth forms (or $(1,1)$-currents) induces a real structure on $H^{1,1}_{\partial\overline{\partial}}(X,\mathbb{C})$, and we denote by $H^{1,1}_{\partial\overline{\partial}}(X,\mathbb{R})$ the space of real points.  A class $\alpha\in H^{1,1}_{\partial\overline{\partial}}(X,\mathbb{C})$ can be seen as an affine space of closed $(1,1)$-currents. We denote by $\{T\}\in H^{1,1}_{\partial\overline{\partial}}(X,\mathbb{C}) $ the class of the current $T$. Since $i\partial\overline{\partial}$ is a real operator (on forms or currents), if $T$ is a real closed $(1,1)$-current, its class $\{T\}$ lies in $H^{1,1}_{\partial\overline{\partial}}(X,\mathbb{R})$ and consists of all the closed currents $T+i\partial\overline{\partial}\varphi$ where $\varphi$ is a real current of degree $0$.

\begin{definition}\label{pf-nef-kahler}
Let $(X,\omega$) be a compact Hermitian manifold. A cohomology class $\alpha\in H^{1,1}_{\partial\overline{\partial}}(X,\mathbb{R})$ is said to be pseudo-effective  iff it contains a positive current; $\alpha$ is nef  iff, for each $\varepsilon>0$, $\alpha$ contains a smooth form $\theta_\varepsilon$ with $\theta_\varepsilon\geq -\varepsilon\omega$; $\alpha$ is  big  iff it contains a K\"{a}hler current, i.e. a closed $(1,1)$-current $T$ such that $T\geq\varepsilon\omega$ for $\varepsilon>0$ small enough. Finally, $\alpha$ is a  K\"{a}hler class  iff it contains a K\"{a}hler form.
\end{definition}
Since any two Hermitian forms $\omega_1$ and $\omega_2$ are commensurable ( i.e. $C^{-1}\omega_2\leq \omega_1\leq C\omega_2$ for some $C>0$), these definitions do not depend on the choice of $\omega$.

\subsection{Lebesgue decomposition of a current}\label{Lebesgue decomposition} In this subsection, we refer to \cite{Bou02, MM07}.
For a measure $\mu$ on a manifold $M$ we denote by $\mu_{ac}$ and $\mu_{sing}$ the uniquely determined absolute continuous and singular measures (with respect to the Lebesgue measure on $M$) such that
\begin{align*}
\mu=\mu_{ac}+\mu_{sing}
\end{align*}
which is called the Lebesgue decomposition of $\mu$. If $T$ is a $(1,1)$-current of order $0$ on $X$, written locally $T=i\sum T_{ij}dz_i\wedge d\overline{z}_j$, we defines its absolute continuous and singular components by
\begin{align*}
T_{ac}&=i\sum (T_{ij})_{ac}dz_i\wedge d\overline{z}_j,\\
T_{sing}&=i\sum (T_{ij})_{sing}dz_i\wedge d\overline{z}_j.
\end{align*}
The Lebesgue decomposition of $T$ is then
\begin{align*}
T=T_{ac}+T_{sing}. 
\end{align*}
If $T\geq 0$, it follows that $T_{ac}\geq 0$ and $T_{sing}\geq 0$. Moreover, if $T\geq \alpha$ for a continuous $(1,1)$-form $\alpha$, then $T_{ac}\geq \alpha$, $T_{sing}\geq 0$. The Radon-Nikodym theorem insures that $T_{ac}$ is (the current associated to) a $(1,1)$-form with $L^1_{loc}$ coefficients. The form $T_{ac}(x)^n$ exists for almost all $x\in X$ and is denoted $T^n_{ac}$.

Note that $T_{ac}$ in general is not closed, even when $T$ is, so that the decomposition doesn't induce a significant decomposition at the cohomological level. However, when $T$ is a closed positive $(1,1)$-current with analytic singularities along a subscheme $V$, the residual part $R$ in Siu decomposition (c.f.\cite{Siu74}) of $T$ is nothing but $T_{ac}$, and the divisorial part $\sum_k\nu(T,Y_k)[Y_k]$ is $T_{sing}$. The following facts are well-known.
\begin{lemma}[c.f.\cite{Bou02}]\label{push-forward}
Let $f:Y\rightarrow X$ be a proper surjective holomorphic map. If $\alpha$ is a locally integrable form of bidimension $(k,k)$ on $Y$, then the push-forward current $f_*\alpha$ is absolutely continuous, hence a locally integrable form of bidimension $(k,k)$. In particular, when $T$ is a positive current on $Y$, the push-forward current $f_*(T_{ac})$ is absolutely continuous, and we have the formula $f_*(T_{ac})=(f_*T)_{ac}$.

\end{lemma}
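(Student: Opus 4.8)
The plan is to reduce everything to the first assertion and then read off the formula from the uniqueness of the Lebesgue decomposition. Since absolute continuity is a local property on $X$, I would fix a coordinate chart and first record that $f_*\alpha$ has order zero: for a continuous test form $\phi$ one has $\langle f_*\alpha,\phi\rangle=\int_Y\alpha\wedge f^*\phi$, and since $f$ is proper the set $f^{-1}(\operatorname{supp}\phi)$ is compact, so this pairing is bounded by a finite multiple of $\int_{f^{-1}(\operatorname{supp}\phi)}|\alpha|\,dV_Y<\infty$. Hence $f_*\alpha$ is represented by a family of complex Radon measures (its coefficients), and to prove it is absolutely continuous it suffices to show these measures charge no Lebesgue-null set; the Radon--Nikodym theorem then produces the $L^1_{loc}$ representative, which has bidimension $(k,k)$ because $f_*$ preserves bidimension.

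The geometric heart is the claim that $f^{-1}(E)$ is Lebesgue-null in $Y$ whenever $E\subset X$ is Lebesgue-null. First I would note that the critical set of $f$ is analytic in $Y$, so by Remmert's proper mapping theorem its image $Z$ is analytic in $X$; since $f$ is surjective and is a submersion on a dense open subset, $Z$ is a \emph{proper} analytic subset, hence null, and $f^{-1}(Z)$ is likewise a proper analytic subset of $Y$ (otherwise $f(Y)\subseteq Z$), hence null. Over $X\setminus Z$ the map $f$ is a proper submersion, so by Ehresmann it is a locally trivial fibre bundle with compact fibres, and the coarea formula gives $\operatorname{vol}_Y\big(f^{-1}(E\setminus Z)\big)=0$ from $\operatorname{vol}_X(E\setminus Z)=0$. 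Writing $f^{-1}(E)\subset f^{-1}(Z)\cup f^{-1}(E\setminus Z)$ proves the claim.

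With this in hand the absolute continuity follows by dominated convergence: given a null set $E$, I would choose open neighbourhoods $U_\varepsilon\downarrow E$ and test forms $\phi_\varepsilon$ supported in $U_\varepsilon$ with comass at most one; then $|\langle f_*\alpha,\phi_\varepsilon\rangle|\le C\int_{f^{-1}(U_\varepsilon)}|\alpha|\,dV_Y\to\int_{f^{-1}(E)}|\alpha|\,dV_Y=0$, the comass of $f^*\phi_\varepsilon$ being bounded uniformly on the compact preimage and $\alpha$ being $L^1_{loc}$ with $f^{-1}(E)$ null. Thus the coefficient measures carry no mass on null sets, $f_*\alpha$ is absolutely continuous, and applying this to $\alpha=T_{ac}$ (which is $L^1_{loc}$ by Radon--Nikodym) shows $f_*(T_{ac})$ is absolutely continuous.

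Finally, from $f_*T=f_*(T_{ac})+f_*(T_{sing})$ and the absolute continuity of the first term, the identity $f_*(T_{ac})=(f_*T)_{ac}$ will follow from the uniqueness of the Lebesgue decomposition once I show that $f_*(T_{sing})$ is purely singular. I expect this to be the main obstacle: $T_{sing}\ge 0$ is carried by a null set $N\subset Y$, but the image of a null set under a map that lowers dimension need not be null, so absolute continuity is \emph{not} automatically preserved by $f_*$. The resolution uses the holomorphic structure. In the equidimensional case --- the one relevant to modifications in the Siu decomposition --- $f$ is a $C^1$ map between manifolds of the same dimension, hence has Luzin's property (N), so $f(N)$ is null and $f_*(T_{sing})$ is carried by a null set. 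When $T$ has analytic singularities one argues even more directly: $T_{sing}=\sum_k\nu_k[V_k]$ is divisorial, and each $f_*[V_k]$ is either zero (if $f$ contracts $V_k$) or a positive multiple of the integration current along the analytic set $f(V_k)$, so $f_*(T_{sing})$ is again singular. In either case the uniqueness of the Lebesgue decomposition yields $f_*(T_{ac})=(f_*T)_{ac}$.
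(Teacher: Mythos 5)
The paper itself offers no proof of this lemma: it is quoted as a well-known fact with the citation to \cite{Bou02}, so there is no internal argument to compare against. Your proposal is essentially the standard proof (and in outline the one in Boucksom's paper): order-zero coefficients of $f_*\alpha$, the fact that $f^{-1}(E)$ is null for $E$ null (via Remmert plus Sard to get a proper analytic set $Z$ of critical values, Ehresmann/Fubini over $X\setminus Z$, and nullity of the proper analytic set $f^{-1}(Z)$, which uses surjectivity), dominated convergence to kill the mass on null sets, Radon--Nikodym, and then uniqueness of the Lebesgue decomposition once $f_*(T_{sing})$ is shown singular. The measure-theoretic steps you leave implicit (that $\bigcap_\varepsilon U_\varepsilon$ can be arranged to be a null $G_\delta$ by outer regularity, and that vanishing of $\langle f_*\alpha,\phi_\varepsilon\rangle$ for comass-one test forms controls the total variation of each coefficient measure) are routine and fixable as stated.

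Your instinct that the final formula is the delicate point deserves emphasis: it is not merely that singularity is ``not automatically preserved'' --- the formula $f_*(T_{ac})=(f_*T)_{ac}$ is actually \emph{false} for general proper surjective holomorphic maps as the lemma is transcribed here. Take $Y=X\times\mathbb{P}^1$, $f$ the projection, and $T=[X\times\{pt\}]$: then $T_{ac}=0$, while $f_*T$ is the constant function $1$, which is absolutely continuous, so $(f_*T)_{ac}\neq f_*(T_{ac})$. (Your own divisorial argument breaks at exactly this point: when $\dim Y>\dim X$ a component $V_k$ of $T_{sing}$ can surject onto $X$, making $f_*[V_k]$ absolutely continuous.) Boucksom's original statement concerns modifications, i.e.\ equidimensional maps, and every application in this paper is of that type ($\pi:\widetilde{X}\rightarrow X$ a resolution, $\mu_k:X_k\rightarrow X$ a smooth modification), so your Luzin property~(N) argument --- holomorphic equidimensional maps are locally Lipschitz, hence map null carriers $N$ of $T_{sing}$ to null sets, so $f_*(T_{sing})$ is singular and uniqueness of the Lebesgue decomposition applies --- proves everything the paper actually uses. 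In short: your proof is correct where the statement is correct, and your hesitation about the unrestricted case identifies a genuine overstatement in the lemma as printed rather than a gap in your argument.
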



\subsection{Resolution of singularities}\label{resolution of singularities}

\begin{definition}[Currents with analytic singularities]\label{APC}We say that a function $\phi$ on $X$ has analytic singularities along a subscheme $V(\mathscr{I})$ (corresponding to a coherent ideal sheaf $\mathscr{I}$) if there exists $c>0$ such that $\phi$ is locally congruent to $\frac{c}{2}\log(\sum|f_i|^2)$ modulo smooth functions, where $f_1,\cdots, f_r$ are local generators of $\mathscr{I}$.  Note that a function with analytic singularities is automatically almost plurisubharmonic, and is smooth away from the support of $V(\mathscr{I})$.

We say an almost positive $(1,1)$-current  has analytic  singularities, if we can find a smooth form $\theta$ and a function  $\varphi$ on $X$ with analytic   singularities, such that $T=\theta+dd^c\varphi$. Note that one can always write $T=\theta+dd^c\varphi$ with $\theta$ smooth and $\varphi$ almost plurisubharmonic on a compact complex manifold.
\end{definition}

\begin{definition}[Pull back of $(1,1)$-currents]\label{pull-back}
Let $f:Y\rightarrow X$ be a surjective holomorphic map between compact complex manifolds and $T$ be a closed almost positive $(1,1)$-current on $X$. Write $T=\theta+dd^c\varphi$ for some smooth form $\theta\in\{T\}$, and $\varphi$ an almost plurisubharmonic function on $X$.  We define its  pull back $f^*T$ by $f$ to be $f^*\theta+dd^cf^*\varphi$. Note that this definition is independent of the choices made, and we have $\{f^*T\}=f^*\{T\}$.
\end{definition}

We now use the notations in Definition \ref{APC}. From \cite{Hir64, BiM91, BiM97}, one can  blow-up $X$ along $V(\mathscr{I})$ and resolve the singularities, to get a smooth modification $\pi:\widetilde{X}\rightarrow X$, where $\widetilde{X}$ is a compact complex manifold, such that $\pi^{-1}{\mathscr{I}}$ is just $\mathcal{O}(-D)$ for some simple normal crossing divisor $D$ upstairs.   The pull back $\pi^*T$ clearly has analytic singularities along $V(\pi^{-1}(\mathscr{I}))=D$, thus its Siu decomposition  writes

\begin{align}
\pi^*T=(\pi^*T)_{ac}+(\pi^*T)_{sing}=\theta+c[D],\notag
\end{align}
where $\theta$ is a smooth $(1,1)$-form.  If $T\geq \gamma$ for some smooth form $\gamma$, then $\pi^*T\geq \gamma$, and thus $\theta\geq \pi^*\gamma$. We call this operation a resolution of the singularities of $T$.

\subsection{Regularization of currents}
The following celebrated regularization  theorem is due to Demailly.
\begin{thm}[c.f.\cite{Dem82, Dem92}]\label{regularization}
Let $T$ be a closed almost positive $(1,1)$-current on a compact Hermitian manifold $(X,\omega)$. Suppose that $T\geq\gamma$ for some smooth $(1,1)$-form $\gamma$ on $X$. Then
There exists a sequence $T_k$ of currents with analytic singularities in $\{T\}$ which converges weakly to $T$, and $T_{k,ac}(x)\rightarrow T_{ac}(x)$ a.e., such that $T_k\geq \gamma-\varepsilon_k\omega$ for some sequence $\varepsilon_k>0$ decreasing to $0$, and such that $\nu(T_k,x)$ increases to $\nu(T,x)$ uniformly with respect to $x\in X$.
\end{thm}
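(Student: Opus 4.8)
The plan is to follow Demailly's analytic approximation by Bergman kernels: build the approximants locally from $L^2$ spaces of holomorphic functions, control them from below by the Ohsawa--Takegoshi extension theorem and from above by the sub-mean-value inequality, and then patch the local pieces together. The admissible loss of positivity $\varepsilon_k\omega$ and the monotone convergence of Lelong numbers will both be extracted from the resulting two-sided pointwise estimates.

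First I reduce to a local model adapted to $\gamma$. Fix a finite cover of $X$ by small coordinate balls $B_j$ centered at points $a_j$. On each $B_j$ the $\partial\overline{\partial}$-Poincar\'e lemma for closed real $(1,1)$-currents gives a quasi-plurisubharmonic potential $\varphi_j$ with $T=i\partial\overline{\partial}\varphi_j$, and $T\geq\gamma$ reads $i\partial\overline{\partial}\varphi_j\geq\gamma$. To carry the lower bound $\gamma$ through the construction I freeze its coefficients: let $g_j$ be a real quadratic polynomial with $i\partial\overline{\partial}g_j\equiv\gamma(a_j)$, the constant-coefficient form obtained by evaluating $\gamma$ at the center. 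Then $\psi_j:=\varphi_j-g_j$ satisfies $i\partial\overline{\partial}\psi_j=T-\gamma(a_j)\geq\gamma-\gamma(a_j)\geq-\delta_j\omega$, where $\delta_j$ is the oscillation of $\gamma$ on $B_j$, which tends to $0$ as the cover is refined; thus $\psi_j$ is plurisubharmonic up to the small error $\delta_j$.

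Next, on each $B_j$ and for each integer $m$ I form the Bergman approximant of $\psi_j$. Let $\{\sigma^{(m)}_{j,\ell}\}_\ell$ be an orthonormal basis of the space of holomorphic functions on $B_j$ that are square integrable against the weight $e^{-2m\psi_j}$, and put
\[
\psi_{j,m}=\frac{1}{2m}\log\sum_\ell\bigl|\sigma^{(m)}_{j,\ell}\bigr|^{2},\qquad \varphi_{j,m}:=\psi_{j,m}+g_j .
\]
Here $\psi_{j,m}$ is plurisubharmonic with analytic singularities along the subscheme cut out by the multiplier ideal $\mathscr{I}(m\psi_j)$, so $\varphi_{j,m}$ has analytic singularities as well, and since $i\partial\overline{\partial}\psi_{j,m}\geq 0$ we obtain the local lower bound $i\partial\overline{\partial}\varphi_{j,m}\geq\gamma(a_j)\geq\gamma-\delta_j\omega$. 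The Ohsawa--Takegoshi theorem, applied to extend a holomorphic function from a single point, gives $\psi_{j,m}\geq\psi_j-\tfrac{C_1}{m}$, while the sub-mean-value inequality for $\sum_\ell|\sigma^{(m)}_{j,\ell}|^2$ gives
\[
\psi_{j,m}(x)\leq\sup_{|\zeta-x|<r}\psi_j(\zeta)+\frac{1}{m}\log\frac{C_2}{r^{n}} .
\]
These bounds force $\varphi_{j,m}\to\varphi_j$ in $L^1_{\mathrm{loc}}$, hence weak convergence of $i\partial\overline{\partial}\varphi_{j,m}$ to $T$; they yield the a.e.\ convergence of the absolutely continuous densities that underlies $T_{k,ac}\to T_{ac}$; and they confine the Lelong numbers to the sandwich $\nu(\psi_j,x)-\tfrac{n}{m}\leq\nu(\psi_{j,m},x)\leq\nu(\psi_j,x)$, so that running $m$ through a multiplicatively nested sequence makes the multiplier ideals decrease and the Lelong numbers increase, giving the asserted uniform monotone convergence.

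Finally I globalize. The regularization increments $\varphi_{j,m}-\varphi_j=\psi_{j,m}-\psi_j$ are functions on the individual $B_j$ that agree across each overlap up to $O(1/m)$, because the approximants are squeezed between the same bounds coming from $T$; this smallness is what permits a coherent patching. Using a regularized-maximum construction, with each increment corrected by a constant read off from the comparison estimates, I glue them into a single global function $w_m$ that near every point coincides with one increment $\varphi_{j,m}-\varphi_j$. Setting $T_k:=T+i\partial\overline{\partial}w_m$ then produces a closed current that manifestly lies in $\{T\}$, has analytic singularities, and locally equals $i\partial\overline{\partial}\varphi_{j,m}$; passing to $m_k\uparrow\infty$ yields the desired sequence converging weakly to $T$, with a.e.\ convergence of the absolutely continuous parts and uniform monotone convergence of the Lelong numbers. \textbf{The main obstacle is exactly this gluing step.} The local lower bound $i\partial\overline{\partial}\varphi_{j,m}\geq\gamma-\delta_j\omega$ holds only up to the oscillation $\delta_j$ of $\gamma$, and the regularized maximum introduces further negativity, since the second derivatives of the cutoffs act on the $O(1/m)$ overlap discrepancies to create negative contributions to the complex Hessian. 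The entire difficulty is to absorb the total loss into a single $\varepsilon_k\omega$ with $\varepsilon_k\downarrow 0$ while preserving both the a.e.\ convergence of the $T_{k,ac}$ and the uniform convergence of the Lelong numbers.
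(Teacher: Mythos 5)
The paper itself offers no proof of this statement: Theorem \ref{regularization} is quoted as a known result of Demailly with the references \cite{Dem82, Dem92}, so your attempt can only be measured against Demailly's standard argument --- which is indeed the route you outline (local Bergman approximants $\psi_{j,m}=\frac{1}{2m}\log\sum_\ell|\sigma^{(m)}_{j,\ell}|^2$, the Ohsawa--Takegoshi lower bound, the sub-mean-value upper bound, gluing by a regularized maximum; see also \cite{Dem12}). The difficulty is that your write-up stops, by its own admission, exactly at the step where the proof lives, and the two claims you offer about that step are not correct. First, the assertion that the increments $\psi_{j,m}-\psi_j$ ``agree across each overlap up to $O(1/m)$'' does not follow from your estimates: the upper bound compares $\psi_{j,m}(x)$ with $\sup_{|\zeta-x|<r}\psi_j(\zeta)$, and near the polar set of $\psi_j$ this supremum exceeds $\psi_j(x)$ by an amount that is not $O(1/m)$ for fixed $r$; in addition, on overlaps the frozen quadratic parts $g_j-g_{j'}$ contribute a discrepancy that is $O(\delta)$ in the Hessian but in no way $O(1/m)$ pointwise. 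Demailly's actual mechanism is different: one uses the uniform two-sided sandwich to add explicit constants of size $O((\log r)/m)+O(r)$ to each candidate so that near $\partial B_j$ the $j$-th candidate is \emph{strictly dominated} by the neighboring ones; then the regularized max locally reduces to a maximum over finitely many interior candidates, which is what preserves analytic singularities, and one concludes by a diagonal choice $r=r_k$, $m=m_k$ together with refinement of the cover.

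Second, your diagnosis that ``the regularized maximum introduces further negativity, since the second derivatives of the cutoffs act on the overlap discrepancies'' conflates the regularized maximum with partition-of-unity patching. The regularized maximum $\max_\eta$ of functions that all satisfy $i\partial\overline{\partial}u\geq\alpha$ for one fixed continuous form $\alpha$ satisfies the same lower bound --- no positivity is lost there at all, and this is precisely why the gluing works; in your setting all candidates satisfy the common bound $i\partial\overline{\partial}\varphi_{j,m}\geq\gamma-\delta\omega$, so the glued function does too, and the only losses to absorb into $\varepsilon_k\omega$ are the oscillation terms $\delta_j$ and the $r$- and $1/m$-errors above. Finally, two of the theorem's conclusions are asserted rather than derived: the a.e.\ convergence $T_{k,ac}(x)\to T_{ac}(x)$ and the monotone increase of $\nu(T_k,x)$. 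The latter needs the multiplicative subsequence $m_k=2^k$ combined with the subadditivity estimate $\psi_{j,2m}\leq\psi_{j,m}+O(1/m)$ (so that, after constant shifts, the potentials decrease and the Lelong numbers increase), and the former is a genuinely additional property of the Bergman construction, not a formal consequence of $L^1_{\mathrm{loc}}$ convergence of potentials --- weak convergence of $i\partial\overline{\partial}\varphi_{j,m}$ says nothing pointwise about the absolutely continuous densities. So the skeleton is the right one, but the proof as written has a genuine gap at the gluing and in both convergence refinements.
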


\subsection{Lamari's criterion}\label{lamari}
\begin{thm}[c.f. \cite{Lam99a}]\label{la-lemma}
Let $X$ be an $n$-dimensional compact complex manifold and let $\Phi$ be a real $(k,k)$-form, then there exists a real $(k-1,k-1)$-current $\Psi$ such that $\Phi+dd^c\Psi$ is positive iff for any strictly positive $\partial\overline{\partial}$-closed $(n-k,n-k)$-forms $\Upsilon$, we have $\int_X\Phi\wedge\Upsilon\geq 0$.
\end{thm}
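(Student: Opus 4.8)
The plan is to establish bigness of $\{T\}$ by contradiction through Lamari's duality (Theorem \ref{la-lemma}), and to observe that the hypothesis of analytic singularities is precisely what allows the absolutely continuous part $T_{ac}$ to play the role of the semi-positive representative in Nguyen's Theorem \ref{Ngu}. Thus the result should follow by feeding the data produced below into Nguyen's degenerate Monge-Amp\`ere machinery essentially unchanged, with the quantity $\int_X\beta^n$ there replaced by $\int_X T_{ac}^n>0$.

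First I would fix a smooth representative $\theta_0\in\{T\}$ and write $T=\theta_0+dd^c\varphi$ with $\varphi$ a function with analytic singularities (in particular $\varphi\in L^1(X)$), as in Definition \ref{APC}. Recall from Definition \ref{pf-nef-kahler} that $\{T\}$ is big iff the class $\{T\}-\varepsilon\{\omega\}$ is pseudo-effective for some $\varepsilon>0$, i.e.\ contains a positive current. Applying Theorem \ref{la-lemma} with $k=1$ to $\Phi=\theta_0-\varepsilon\omega$, the failure of pseudo-effectivity for every $\varepsilon>0$ produces, for each $\varepsilon$, a strictly positive $\partial\overline{\partial}$-closed $(n-1,n-1)$-form $\Upsilon_\varepsilon$ with $\int_X(\theta_0-\varepsilon\omega)\wedge\Upsilon_\varepsilon<0$. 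By Michelsohn's theorem I would write $\Upsilon_\varepsilon=\gamma_\varepsilon^{\,n-1}$ for a Gauduchon metric $\gamma_\varepsilon$ (the condition $\partial\overline{\partial}\,\gamma_\varepsilon^{\,n-1}=0$ being equivalent to $\partial\overline{\partial}\Upsilon_\varepsilon=0$), normalized so that $\int_X\gamma_\varepsilon^{\,n}=1$; thus $\int_X\theta_0\wedge\gamma_\varepsilon^{\,n-1}<\varepsilon\int_X\omega\wedge\gamma_\varepsilon^{\,n-1}$.

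The key new step, and the only place the hypotheses on $T$ enter, is the chain
\begin{align*}
\int_X\theta_0\wedge\gamma_\varepsilon^{\,n-1}
=\int_X T\wedge\gamma_\varepsilon^{\,n-1}
\geq\int_X T_{ac}\wedge\gamma_\varepsilon^{\,n-1}
\geq c_n\int_X\big(T_{ac}^{\,n}\big)^{1/n}\big(\gamma_\varepsilon^{\,n}\big)^{(n-1)/n}.
\end{align*}
The first equality is integration by parts: since $\gamma_\varepsilon^{\,n-1}$ is $\partial\overline{\partial}$-closed and $\varphi\in L^1$, one has $\langle dd^c\varphi,\gamma_\varepsilon^{\,n-1}\rangle=\langle\varphi,dd^c\gamma_\varepsilon^{\,n-1}\rangle=0$. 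The second uses the Lebesgue decomposition $T=T_{ac}+T_{sing}$ with $T_{sing}\geq0$ and $\gamma_\varepsilon^{\,n-1}>0$. The third is the pointwise arithmetic-geometric mean inequality applied to the a.e.\ nonnegative $(1,1)$-form $T_{ac}$ against $\gamma_\varepsilon$. This is exactly the input Nguyen extracts from a semi-positive class, now with $\int_X\beta^n$ replaced by $\int_X T_{ac}^n>0$; the analytic-singularities hypothesis is what guarantees that $T_{ac}$ is a genuine nonnegative $(1,1)$-form, smooth off an analytic set (after a resolution as in \S\ref{resolution of singularities}), for which $\int_X T_{ac}^n$ is well defined and the above manipulations are legitimate. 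Running Nguyen's degenerate Monge-Amp\`ere argument (which uses $\partial\overline{\partial}\omega=0$) on this data should then yield a contradiction as $\varepsilon\to0$, so $\{T\}$ must be big.

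Once $\{T\}$ is big it contains a K\"ahler current, so $X$ lies in the Fujiki class $\mathcal{C}$ by Demailly--P\u{a}un; combined with the pluriclosed metric $\omega$ this forces $X$ to be K\"ahler, exactly as in Theorems \ref{Ngu} and \ref{Wang}. I expect the main obstacle to be this closing contradiction, not the reduction: the elementary inequalities above do \emph{not} by themselves contradict $\int_X T_{ac}^n>0$, since Jensen's inequality runs in the wrong direction and the masses $\gamma_\varepsilon^{\,n}$ may concentrate. Ruling out this concentration is precisely the content of Nguyen's Monge-Amp\`ere estimates, and the real substance of the proof is to check that those estimates depend only on the pairing data displayed above, so that the (possibly non-semipositive) smooth representative $\theta_0$ and the discarded singular part $T_{sing}$ introduce no new analytic difficulty.
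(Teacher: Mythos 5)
Your proposal does not prove the statement it was assigned. The statement is Theorem \ref{la-lemma}, Lamari's duality criterion itself: for a real $(k,k)$-form $\Phi$, the existence of a real $(k-1,k-1)$-current $\Psi$ with $\Phi+dd^c\Psi\geq 0$ is equivalent to $\int_X\Phi\wedge\Upsilon\geq 0$ for every strictly positive $\partial\overline{\partial}$-closed $(n-k,n-k)$-form $\Upsilon$. Your text instead \emph{invokes} Theorem \ref{la-lemma} as a black box (you apply it with $k=1$ to $\Phi=\theta_0-\varepsilon\omega$ to produce the Gauduchon metrics $\gamma_\varepsilon$) and then sketches a proof of Theorem \ref{main theorem} via Nguyen's Theorem \ref{Ngu}. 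As an argument for the statement in question this is circular, and as a matter of content it addresses a different theorem altogether. (For what it is worth, the paper itself offers no proof either: the criterion is quoted from \cite{Lam99a}, so the benchmark is Lamari's original argument. Incidentally, your sketch of Theorem \ref{main theorem} also diverges from the paper's actual proof of that result, which does not run Nguyen's Monge-Amp\`ere estimates directly against $T_{ac}$ on $X$, but first resolves the singularities of $T$ by $\pi:\widetilde{X}\rightarrow X$, writes $\pi^*T=\theta+c[D]$ with $\theta$ smooth semi-positive, and applies Theorem \ref{Ngu} upstairs to $\theta$ with a pluriclosed metric $\Omega$ from Lemma \ref{blow-up}.)

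A genuine proof of Theorem \ref{la-lemma} is a Hahn--Banach separation argument in the duality between smooth $(n-k,n-k)$-forms and $(k,k)$-currents, and none of its ingredients appear in your proposal. The easy direction is Stokes: if $\Phi+dd^c\Psi\geq 0$ and $\partial\overline{\partial}\Upsilon=0$, then $\int_X(\Phi+dd^c\Psi)\wedge\Upsilon=\int_X\Phi\wedge\Upsilon\geq 0$. For the substantial direction one considers the linear functional $\Upsilon\mapsto\int_X\Phi\wedge\Upsilon$ on the closed subspace of $\partial\overline{\partial}$-closed real $(n-k,n-k)$-forms, nonnegative by hypothesis on the open convex cone of strictly positive ones; Hahn--Banach extends it to a positive functional on all forms, i.e.\ a positive $(k,k)$-current $T$ agreeing with $\Phi$ against every $\partial\overline{\partial}$-closed test form. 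One must then conclude that $T-\Phi$ lies in $dd^c$ of the space of real $(k-1,k-1)$-currents, which requires precisely the closed-range/Hausdorff property of the $\partial\overline{\partial}$-operator on currents recalled in the paper's Section \ref{BC-cohomology} (via the Fr\"olicher spectral sequence): a current annihilating all $\partial\overline{\partial}$-closed forms is $\partial\overline{\partial}$-exact, not merely in the weak closure of the exact ones. These two steps --- the positive extension and the exactness of the separating difference --- are the entire content of the theorem, and your proposal contains neither.
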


\begin{rmk}
Theorem \ref{la-lemma} also holds for  closed positive $(1,1)$-currents. 
\end{rmk}

\subsection{Complex Monge-Amp\`{e}re equations}
%

The following theorem on complex Monge-Amp\`{e}re equations on compact Hermitian manifolds  is due to Tosatti-Weinkove \cite{TW10}.
\begin{thm}[\cite{TW10}]\label{Monge-Ampere equation}
	Let $(X,\omega)$ be a compact Hermitian manifold. For any smooth real-valued function $F$ on $X$, there exist a unique real number $C>0$ and a unique smooth real-valued function $\phi$ on $X$ solving
	\begin{align}
	(\omega+i\partial\overline{\partial}\phi)^n=Ce^F\omega^n,\notag
	\end{align}
	with $\omega+i\partial\overline{\partial}\phi>0$ and $\sup_X\phi=0$.  Furthermore, if $\partial\overline{\partial}\omega^k=0$ for $1\leq k\leq n-1$, then we have
	\begin{align}
	C=\frac{\int_X\omega^n}{\int_Xe^F\omega^n}.\notag
	\end{align}
\end{thm}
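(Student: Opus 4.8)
The plan is to reduce everything to Nguyen's Theorem \ref{Ngu} by resolving the singularities of $T$ and then descending the resulting bigness. First I would invoke the resolution of Section \ref{resolution of singularities}: since $T$ is a closed positive current with analytic singularities along a subscheme $V(\mathscr{I})$, choose a modification $\pi:\widetilde{X}\rightarrow X$ with $\pi^*T=\theta+c[D]$, where $\theta=(\pi^*T)_{ac}$ is a smooth $(1,1)$-form and $c[D]=(\pi^*T)_{sing}$ is carried by the exceptional locus. Because $T\geq 0$, taking $\gamma=0$ in that subsection gives $\theta\geq 0$, so $\{\theta\}$ is a semi-positive class on $\widetilde{X}$, and in particular $\theta$ is a genuine smooth semi-positive representative.

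Next I would verify $\int_{\widetilde{X}}\theta^n>0$. Off the exceptional locus $E=\pi^{-1}(V(\mathscr{I}))$ the map $\pi$ is biholomorphic and $T$ is smooth (analytic singularities are smooth away from $V(\mathscr{I})$), so there $[D]=0$ and $\theta=\pi^*(T_{ac})$; hence $\int_{\widetilde{X}\setminus E}\theta^n=\int_{X\setminus V(\mathscr{I})}T_{ac}^n=\int_X T_{ac}^n$, since $V(\mathscr{I})$ is Lebesgue-null and $T_{ac}^n\in L^1$. As $\theta\geq 0$ everywhere, $\int_{\widetilde{X}}\theta^n\geq\int_X T_{ac}^n>0$. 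Now, equipping $\widetilde{X}$ with a pluriclosed metric $\widetilde{\omega}$, Theorem \ref{Ngu} applies to $\{\theta\}$ and produces a K\"{a}hler current $S=\theta+dd^c\psi\in\{\theta\}$ with $S\geq\varepsilon\widetilde{\omega}$. To descend, I note that $S+c[D]$ lies in $\pi^*\{T\}=\{\theta+c[D]\}$ and satisfies $S+c[D]\geq S\geq\varepsilon\widetilde{\omega}\geq\varepsilon'\pi^*\omega$, using $\pi^*\omega\leq C\widetilde{\omega}$ on the compact $\widetilde{X}$. Pushing forward preserves positivity and $\pi_*\pi^*\omega=\omega$, so $\pi_*(S+c[D])$ is a closed positive current in $\pi_*\pi^*\{T\}=\{T\}$ with $\pi_*(S+c[D])\geq\varepsilon'\omega$. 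Thus $\{T\}$ contains a K\"{a}hler current and is big; by Demailly--P\u{a}un, $X$ is in the Fujiki class $\mathcal{C}$, and the passage from the Fujiki class to $X$ K\"{a}hler is obtained exactly as in Nguyen's argument, our hypothesis $\partial\overline{\partial}\omega=0$ being identical to his.

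The step I expect to be the main obstacle is ensuring that the resolution $\widetilde{X}$ carries a pluriclosed metric, so that Theorem \ref{Ngu} may be applied upstairs: the condition $\partial\overline{\partial}\omega=0$ is not a bimeromorphic invariant, so this is not automatic. In the surface case it is free, since every compact surface admits a Gauduchon metric and Gauduchon coincides with pluriclosed when $n=2$ (there $\partial\overline{\partial}\omega^{n-1}=\partial\overline{\partial}\omega$). In higher dimension one must either invoke the preservation of strong K\"{a}hler with torsion metrics under blow-ups along smooth centers, or instead verify that Nguyen's degenerate Monge--Amp\`{e}re analysis already carries over to the degenerate pluriclosed form $\pi^*\omega$, which obeys $\partial\overline{\partial}(\pi^*\omega)=\pi^*(\partial\overline{\partial}\omega)=0$; the latter route would also make the reduction genuinely self-contained.
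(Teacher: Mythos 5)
Your proposal does not address the statement it is supposed to prove. Theorem \ref{Monge-Ampere equation} is Tosatti--Weinkove's theorem on the complex Monge--Amp\`ere equation: the existence and uniqueness of a constant $C>0$ and a smooth function $\phi$ with $(\omega+i\partial\overline{\partial}\phi)^n=Ce^F\omega^n$, $\omega+i\partial\overline{\partial}\phi>0$, $\sup_X\phi=0$, together with the identification $C=\int_X\omega^n/\int_Xe^F\omega^n$ when $\partial\overline{\partial}\omega^k=0$ for $1\leq k\leq n-1$. What you have written is instead an argument for the paper's Theorem \ref{main theorem} (resolve the singularities of $T$ via $\pi:\widetilde{X}\rightarrow X$ so that $\pi^*T=\theta+c[D]$, check $\int_{\widetilde{X}}\theta^n=\int_XT^n_{ac}>0$ using Lemma \ref{push-forward}, apply Nguyen's Theorem \ref{Ngu} upstairs, and push the K\"ahler current back down) --- and indeed it closely mirrors the paper's Section 4, including the correct worry about producing a pluriclosed metric on $\widetilde{X}$, which the paper handles via Lemma \ref{blow-up}. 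But nowhere do you construct $\phi$, prove uniqueness of the pair $(C,\phi)$, or compute $C$; so as a proof of Theorem \ref{Monge-Ampere equation} the attempt is vacuous, not merely gappy.

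For the record, the paper itself does not reprove this theorem either: it quotes it from \cite{TW10}, where the existence and uniqueness are established by the continuity method with hard a priori estimates (a Cherrier-type $C^0$ estimate valid on general Hermitian manifolds, second-order estimates, then Evans--Krylov and Schauder theory), none of which can be obtained by the soft current-theoretic tools in your proposal. The one part of the statement that \emph{is} accessible by elementary means is the final formula: expanding $(\omega+i\partial\overline{\partial}\phi)^n=\sum_{j=0}^n\binom{n}{j}(i\partial\overline{\partial}\phi)^j\wedge\omega^{n-j}$ and integrating, each term with $j\geq 1$ satisfies
\begin{align*}
\int_X(i\partial\overline{\partial}\phi)^j\wedge\omega^{n-j}=\int_X\phi\,(i\partial\overline{\partial}\phi)^{j-1}\wedge i\partial\overline{\partial}\big(\omega^{n-j}\big)=0
\end{align*}
by Stokes' theorem, since $(i\partial\overline{\partial}\phi)^{j-1}$ is $d$-closed and $\partial\overline{\partial}\omega^{n-j}=0$ by hypothesis; hence $\int_X(\omega+i\partial\overline{\partial}\phi)^n=\int_X\omega^n$, and the equation forces $C\int_Xe^F\omega^n=\int_X\omega^n$. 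If you want a genuinely self-contained treatment of this theorem, that Stokes computation plus a reduction of uniqueness to the Hermitian maximum-principle argument is the part worth writing out; the existence theory should simply be cited.
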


\section{A simplified   proof of Theorem \ref{Wang}}

From the definition of vol$(\alpha)$, one can find a positive closed current $S\in \alpha$ such that $\int_XS^n_{ac}>\frac{\mbox{vol}(\alpha)}{2}>0$. Then by Demailly's regularization theorem (Theorem \ref{regularization}), combined with Fatou's lemma, we can find a sequence $T_k$ of closed currents with analytic singularities in $\alpha$ such that 
\begin{itemize}
\item $T_k\geq -\varepsilon_k\omega$, where $\varepsilon_k\searrow 0$  as $k\rightarrow \infty$;
\item $\int_XT^n_{k,ac}\geq c>0$.
\end{itemize}

For each $k$, we choose a smooth proper modification $\mu_k:X_k\rightarrow X$ such that 
\begin{itemize}
\item  $\mu^*_kT_k=\theta_k+D_k$, where $\theta_k\geq -\varepsilon_k\mu_k^*\omega$ is a smooth closed form and $D_k$ is a real effective divisor.
\item $\int_XT^n_{k,ac}=\int_{X_k}\theta_k^n\geq c>0$.
\end{itemize}

\begin{lemma}[c.f. \cite{Dem12}]\label{blow-up}Suppose that $(X,\omega)$ is a compact Hermitian manifold with a Hermitian metric $\omega$ satisfying $\partial\overline{\partial}\omega^k=0$ for all $k$.  Let $\pi:\widetilde{X}\rightarrow X$ be a smooth modification (a tower of blow-ups).  Then  there exists a Hermitian metric $\Omega$ on $\widetilde{X}$ such that
	\begin{itemize} 
		\item $\partial\overline{\partial}\Omega^k=0$  for all $k$ on $\widetilde{X}$;
		\item for any give $\varepsilon>0$, one can make the inequality $\|\Omega-\mu^*\omega\|<\varepsilon$ holds.
		\end{itemize}
\end{lemma}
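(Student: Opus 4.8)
The plan is to reduce the assertion to the case of a single blow-up along a smooth centre and then to produce $\Omega$ by adding to $\pi^*\omega$ a small multiple of the curvature form of an auxiliary line bundle. Writing $\pi$ for the modification (the map denoted $\mu$ in the statement), I would factor the tower as $\pi=\pi_1\circ\cdots\circ\pi_N$, where each $\pi_i$ is the blow-up of a compact complex manifold along a smooth centre, and argue by induction on $N$. Granting the single blow-up case, an application to $(X,\omega)$ produces a metric $\omega_1$ on $X_1$ with $\partial\overline{\partial}\omega_1^k=0$ for all $k$ and with $\omega_1-\pi_1^*\omega$ as small as desired; the inductive hypothesis applied to $(X_1,\omega_1)$ and the remaining tower $\widetilde{X}\to X_1$ then finishes the argument, the closeness being preserved because pullback by a fixed holomorphic map is bounded on smooth forms. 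Thus it suffices to treat a single blow-up $\pi:\widetilde{X}\to X$ along a smooth centre $Z$, with exceptional divisor $E$.

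For the single blow-up, let $\gamma:=\tfrac{i}{2\pi}\Theta_h(\mathcal{O}(-E))$ be the Chern curvature form of a smooth Hermitian metric $h$ on $\mathcal{O}(-E)$. Since $\mathcal{O}(-E)|_E=\mathcal{O}_{\mathbb{P}(N_{Z/X})}(1)$ is relatively ample over $Z$, the metric $h$ can be chosen so that $\gamma$ is strictly positive along the fibres of $E\to Z$ in a neighbourhood of $E$; this is exactly the input used in the classical proof that blow-ups of K\"ahler manifolds are K\"ahler. I then set
\[
\Omega:=\pi^*\omega+t\,\gamma .
\]
Because $\pi^*\omega$ is positive definite in the directions transverse to the fibres of $E\to Z$ while $\gamma$ is positive along those fibres, a standard block-matrix (Schur complement) argument shows that $\Omega$ is a genuine Hermitian metric on $\widetilde{X}$ once $t>0$ is small enough. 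Moreover $\Omega-\pi^*\omega=t\gamma$ is a fixed smooth form, so $\|\Omega-\pi^*\omega\|=t\,\|\gamma\|$ can be made smaller than any prescribed $\varepsilon$ in the relevant norm.

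It remains to verify $\partial\overline{\partial}\Omega^k=0$ for all $k$, and this is where the specific choice of $\gamma$ is essential: a curvature form is $d$-closed, and $d$-closedness is precisely what is needed to annihilate the mixed terms produced by wedging with powers of the non-K\"ahler form $\pi^*\omega$. Expanding by the binomial theorem (legitimate since $\pi^*\omega$ and $\gamma$ are of even degree and hence commute),
\[
\Omega^k=\sum_{l=0}^{k}\binom{k}{l}t^l\,\pi^*(\omega^{k-l})\wedge\gamma^l .
\]
For each $l$ the form $\gamma^l$ is $d$-closed, so the Leibniz rule collapses to $\partial\overline{\partial}\big(\pi^*(\omega^{k-l})\wedge\gamma^l\big)=\partial\overline{\partial}\pi^*(\omega^{k-l})\wedge\gamma^l=\pi^*\big(\partial\overline{\partial}\omega^{k-l}\big)\wedge\gamma^l=0$, the last equality using the hypothesis $\partial\overline{\partial}\omega^{k-l}=0$ on $X$. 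Summing over $l$ gives $\partial\overline{\partial}\Omega^k=0$. In the inductive step the same computation applies verbatim with $\omega$ replaced by the metric $\omega_{i-1}$ built at the previous stage, since that metric again satisfies $\partial\overline{\partial}\omega_{i-1}^k=0$ for all $k$ by construction.

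The main obstacle is exactly the verification in the last paragraph: an ad hoc positive correction concentrated near $E$ would wreck the conditions $\partial\overline{\partial}\Omega^k=0$ through the cross terms $\pi^*(\omega^{k-l})\wedge(\,\cdot\,)^l$, and the whole argument hinges on realizing that one should use the $d$-closed curvature form of $\mathcal{O}(-E)$, for which every such cross term is automatically $\partial\overline{\partial}$-closed. Once this is in place, the positivity and the smallness of $\Omega-\pi^*\omega$ are routine, and the tower is handled by the straightforward induction indicated above.
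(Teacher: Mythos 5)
Your proposal is correct and follows essentially the same route as the paper: factor the tower into single blow-ups, correct $\pi^*\omega$ by a small multiple of the ($d$-closed) curvature form of $\mathcal{O}(-E)$ built from a Fubini--Study metric on $\mathcal{O}_{\mathbb{P}(N_{Z/X})}(1)$, check positivity by combining the fibre directions with the transverse ones, and use $d$-closedness of the correction term to kill all mixed terms in $\partial\overline{\partial}\Omega^k$. Your binomial-expansion verification of $\partial\overline{\partial}\Omega^k=0$ is in fact more explicit than the paper's one-line justification of the same point.
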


\begin{proof}
Suppose that $\widetilde{X}$ is obtained as a tower of blow-ups
\begin{align}
\widetilde{X}=X_N\rightarrow X_{N-1}\rightarrow \cdots\rightarrow X_1\rightarrow X_0=X,\notag
\end{align}
where $X_{j+1}$ is the blow-up of $X_j$ along a smooth center $Y_j\subset X_j$. Denote by $E_{j+1}\subset X_{j+1}$ the exceptional divisor, and let $\pi_j:X_{j+1}\rightarrow X_j$ be the blow-up map. The line bundle $\mathcal{O}(-E_{j+1})|_{E_{j+1}}$ is equal to $\mathcal{O}_{P(N_j)}(1)$ where $N_j$ is the normal bundle to $Y_j$ in $X_j$. Pick an arbitrary smooth Hermitian metric on $N_j$, use this metric to get an induced Fubini-Study metric on $\mathcal{O}_{P(N_j)}(1)$, and finally extend this metric as a smooth Hermitian metric on the line bundle $\mathcal{O}(-E_{j+1})$. Such a metric has positive curvature along tangent vectors of $X_{j+1}$ which are tangent to the fibers of $E_{j+1}=P(N_j)\rightarrow Y_j$. Assume further that $\omega_j$ is a Gauduchon metric satisfying assumption (*) on $X_j$. Then
\begin{align}
\Omega_{j+1}=\pi^*_j\omega_j-\varepsilon_{j+1}u_{j+1}\notag
\end{align}
where $\pi^*_j\omega_j$ is  semi-positive on $X_{j+1}$, positive definite on $X_{j+1}\setminus E_{j+1}$, and also positive definite on tangent vectors of $T_{X_{j+1}}|_{E_{j+1}}$ which are not tangent to the fibers of $E_{j+1}\rightarrow Y_j$. It  is then easily to see that $\Omega_{j+1}>0$   by taking $\varepsilon_{j+1}\ll 1$. Thus our final candidate $\Omega$ on $\widetilde{X}$ has the form $\Omega=\pi^*\omega-\sum\varepsilon_j\widetilde{u}_j$, where $\widetilde{u}_j=(\pi_{N-1}\circ \cdots\circ \pi_{j})^*u_j$. Since every $u_j$ is a curvature term of a line bundle, the term $\sum\varepsilon_j\widetilde{u}_j$ is $d$-closed, thus $\partial\overline{\partial}\Omega^k=0$. Furthermore, by choosing $\varepsilon_j$ sufficienlty small, one can make $\|\Omega-\mu^*\omega\|<\varepsilon$  for any give $\varepsilon>0$.
\end{proof}

Select on each $X_k$ a Gauduchon metric $\widetilde{\omega}_k$ which satisfies $\partial\overline{\partial}\widetilde{\omega}_k^l=0$ for all $l$, and $\|\widetilde{\omega}_k-\mu_k^*\omega\|<\varepsilon$  for any give $\varepsilon>0$ .

In the sequel, we will show that for $k$ large, the class ${\theta_k}$ is big on $X_k$. 

We argue by contradiction. By applying Lamari's criterion (Theorem \ref{lamari}), and suppose to the contrary, for any $k$, and any $m\in \mathbb{N}$, there is a Gauduchon metric $\omega_{k,m}$ on $X_k$ such that 
\begin{align*}
\int_{X_k}\theta_k\wedge \omega^{n-1}_{k,m}\leq \frac{1}{m}\int_{X_k}\widetilde{\omega}_k\wedge \omega^{n-1}_{k,m}.
\end{align*}

Without loss of generality, we assume that 
\begin{align*}
\int_{X_k}\widetilde{\omega}_k\wedge \omega^{n-1}_{k,m}=1
\end{align*}
and therefore 
\begin{align*}
\int_{X_k}\theta_k\wedge\omega^{n-1}_{k,m}\leq \frac{1}{m}.
\end{align*}

From Theorem \ref{Monge-Ampere equation}, we solve the following equation on $X_k$,
\begin{align*}
(\theta_k+\varepsilon_k\mu^*_k\omega+\frac{1}{m}\widetilde{\omega}_{k}+dd^c\varphi_{k,m})^n=C_{k,m}\omega^{n-1}_{k,m}\wedge\widetilde{\omega}_k.
\end{align*}

Set $\alpha_{k,m}=\theta_k+\varepsilon_k\mu_k^*\omega+\frac{1}{m}\widetilde{\omega}_k+dd^c\varphi_m$, then $\alpha_{k,m}>0$. 
We need the following 
\begin{lemma}[c.f. \cite{N16, Pop16}]\label{key lemma}
For every $k, m\in \mathbb{N}^*$, we have that 
\begin{align*}
\big(\int_{X_k}{\alpha}_{k,m}\wedge \omega^{n-1}_{k,m}\big)\big(\int_{X_k}\alpha_{k,m}^{n-1}\wedge \widetilde{\omega}_k\big)&\geq \frac{1}{n}\Big(\int_{X_k}\sqrt{\frac{{\alpha}^n_{k,m}}{\widetilde{\omega}_k^n}\frac{\widetilde{\omega}_k\wedge\omega_{k,m}^{n-1}}{\widetilde{\omega}_k^n}}\widetilde{\omega}_k^n\Big)^2\\&=\frac{C_{k,m}}{n}.
\end{align*}
\end{lemma}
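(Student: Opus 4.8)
The plan is to reduce everything to two applications of the Cauchy--Schwarz inequality: one at the level of the integral against the measure $\widetilde{\omega}_k^n$, and one pointwise after simultaneously diagonalizing $\alpha_{k,m}$ and $\omega_{k,m}$. To lighten notation I abbreviate $\alpha:=\alpha_{k,m}$, $\sigma:=\omega_{k,m}$, $\tau:=\widetilde{\omega}_k$ and $C:=C_{k,m}$; all three forms are smooth positive $(1,1)$-forms, so every ratio below is a smooth positive function and all integrals are finite.

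First I would settle the right-hand equality. The Monge--Amp\`ere equation reads $\alpha^n=C\,\sigma^{n-1}\wedge\tau=C\,\tau\wedge\sigma^{n-1}$, so dividing by the volume form $\tau^n$ gives the pointwise identity
\[
\frac{\alpha^n}{\tau^n}=C\,\frac{\tau\wedge\sigma^{n-1}}{\tau^n}.
\]
Hence the quantity under the square root equals $C\big(\tfrac{\tau\wedge\sigma^{n-1}}{\tau^n}\big)^2$, its square root is $\sqrt{C}\,\tfrac{\tau\wedge\sigma^{n-1}}{\tau^n}$, and integrating against $\tau^n$ together with the normalization $\int_{X_k}\tau\wedge\sigma^{n-1}=1$ yields $\sqrt{C}$. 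Squaring and dividing by $n$ gives exactly $C/n$, which is the asserted equality; so only the inequality on the left remains.

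For that inequality I set $f:=\alpha^n/\tau^n$ and $g:=(\tau\wedge\sigma^{n-1})/\tau^n$, so the integrand is $\sqrt{fg}$, and I factor $\sqrt{fg}=\sqrt{u}\,\sqrt{v}$ with $u:=(\alpha\wedge\sigma^{n-1})/\tau^n$ and $v:=fg/u$. Cauchy--Schwarz for the measure $\tau^n$ then gives
\[
\Big(\int_{X_k}\sqrt{fg}\,\tau^n\Big)^2\le\Big(\int_{X_k}u\,\tau^n\Big)\Big(\int_{X_k}v\,\tau^n\Big),
\]
where the first factor on the right is precisely $\int_{X_k}\alpha\wedge\sigma^{n-1}$. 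Thus it suffices to prove the pointwise bound $v\,\tau^n\le n\,\alpha^{n-1}\wedge\tau$, equivalently
\[
\alpha^n\cdot(\tau\wedge\sigma^{n-1})\le n\,(\alpha^{n-1}\wedge\tau)\,(\alpha\wedge\sigma^{n-1}),
\]
since then $\int_{X_k}v\,\tau^n\le n\int_{X_k}\alpha^{n-1}\wedge\tau$ and the two displays combine, after dividing by $n$, to give the claim.

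The heart of the matter is this last pointwise inequality. Fixing a point, I would choose a frame diagonalizing the pair $(\alpha,\sigma)$ simultaneously, so that $\sigma=i\sum_j dz_j\wedge d\bar z_j$, $\alpha=i\sum_j\lambda_j\,dz_j\wedge d\bar z_j$ with $\lambda_j>0$, and $\tau=i\sum_{j,l}a_{j\bar l}\,dz_j\wedge d\bar z_l$ with $(a_{j\bar l})$ positive Hermitian. A direct wedge computation shows that all four top-forms involved depend only on $\lambda_1,\dots,\lambda_n$ and the \emph{diagonal} entries $a_{j\bar j}$ (the off-diagonal entries of $\tau$ never survive the wedge), and after cancelling the common volume form and the positive factor $\prod_j\lambda_j$ the inequality becomes
\[
\sum_j a_{j\bar j}\le\Big(\sum_j\frac{a_{j\bar j}}{\lambda_j}\Big)\Big(\sum_j\lambda_j\Big),
\]
which is immediate from Cauchy--Schwarz applied to the vectors $(\sqrt{a_{j\bar j}/\lambda_j})_j$ and $(\sqrt{\lambda_j})_j$: it yields $\big(\sum_j\tfrac{a_{j\bar j}}{\lambda_j}\big)\big(\sum_j\lambda_j\big)\ge\big(\sum_j\sqrt{a_{j\bar j}}\big)^2\ge\sum_j a_{j\bar j}$. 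I expect the only genuinely fiddly point to be the bookkeeping in this wedge computation that confirms the off-diagonal part of $\tau$ drops out of each term; once that is verified, the whole statement is just Cauchy--Schwarz used twice.
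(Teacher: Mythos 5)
Your proof is correct, and it is essentially the intended one: the paper gives no proof of this lemma but quotes it from \cite{N16, Pop16}, and your argument --- Cauchy--Schwarz in $L^2(\widetilde{\omega}_k^n)$ to split off the factor $\int_{X_k}\alpha_{k,m}\wedge\omega_{k,m}^{n-1}$, followed by the pointwise inequality $\alpha^n\cdot(\tau\wedge\sigma^{n-1})\le n\,(\alpha^{n-1}\wedge\tau)\,(\alpha\wedge\sigma^{n-1})$ proved by simultaneously diagonalizing $(\alpha,\sigma)$ and applying Cauchy--Schwarz again --- is exactly Popovici's proof. The computation of the equality $=C_{k,m}/n$ correctly invokes the Monge--Amp\`ere equation and the normalization $\int_{X_k}\widetilde{\omega}_k\wedge\omega_{k,m}^{n-1}=1$ in force in this section, and your verification that the off-diagonal entries of $\tau$ drop out of both $\tau\wedge\sigma^{n-1}$ and $\alpha^{n-1}\wedge\tau$ is accurate.
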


Firstly, we have that 
\begin{align}\label{estimate of Ckm}
C_{k,m}&=\int_{X_k}(\theta_k+\varepsilon_k\mu^*_k\omega+\frac{1}{m}\widetilde{\omega}_{k}+dd^c\varphi_{k,m})^n\\
&=\int_{X_k}(\theta_k+\varepsilon_k\mu^*_k\omega+\frac{1}{m}\widetilde{\omega}_{k})^n\geq\int_{X_k}(\theta_k+\varepsilon_k\mu^*_k\omega)^n\notag\\
&=\int_{X}(T_{k,ac}+\varepsilon_k\omega)^n=\int_XT_{k,ac}^n+O(\varepsilon_k)\notag\\
&\geq C>0\notag
\end{align}
for $k$ sufficiently large, where $C$ is a uniform constant independent of $k$ and $m$.

Secondly, we have the following observation
\begin{align}\label{estimate of the second term}
\int_{X_k}\alpha_{k,m}^{n-1}\wedge \widetilde{\omega}_k&=\int_{X_k}(\theta_k+\varepsilon_k\mu^*_k\omega+\frac{1}{m}\widetilde{\omega}_{k}+dd^c\varphi_{k,m})^{n-1}\wedge \widetilde{\omega}_{k}\\
&=\int_{X_k}(\theta_k+\varepsilon_k\mu^*_k\omega+\frac{1}{m}\widetilde{\omega}_{k})^{n-1}\wedge \widetilde{\omega}_{k}\notag\\
&\leq \int_{X}(T_{k,ac}+\varepsilon_k\omega+\frac{2}{m}\omega)^{n-1}\wedge (2\omega)\leq M (>0)\notag
\end{align}
for $k$ sufficiently large, where $M$ is a uniform constant independent of $k$ and $m$.

Thirdly, we see that for $k$ large
\begin{align}\label{estimate of the last term}
\int_{X_k}{\alpha}_{k,m}\wedge \omega^{n-1}_{k,m}&=\int_{X_k}(\theta_k+\varepsilon_k\mu^*_k\omega+\frac{1}{m}\widetilde{\omega}_{k}+dd^c\varphi_{k,m})\wedge \omega^{n-1}_{k,m}\\
&=\int_{X_k}(\theta_k+\varepsilon_k\mu^*_k\omega+\frac{1}{m}\widetilde{\omega}_{k})\wedge \omega^{n-1}_{k,m}\notag
\\
&\leq\int_{X_k}\theta_k\wedge \omega^{n-1}_{k,m}+\frac{2}{m}\int_{X_k}\widetilde{\omega}_k\wedge\omega^{n-1}_{k,m}\leq \frac{3}{m}.\notag
\end{align}

Finally, from Lemma \ref{key lemma}, (\ref{estimate of Ckm}), (\ref{estimate of the second term}) and (\ref{estimate of the last term}), we obtain that  for $k$ large
\begin{align*}
\frac{3M}{m}\geq \frac{C}{n},
\end{align*}
which is absurd for $m>>0$. 

Now we arrive at  the fact that, for $k$ large, $\{\theta_k\}$ is a big class on $X_k$.  Take a K\"{a}hler current $\Theta$ in $\{\theta_k\}$, one can get a K\"{a}hler current $(\mu_k)_*(\Theta+ D_k)$ on $X$, which belongs to the class $\alpha$.  Thus $X$ is in the Fujiki class $\mathcal{C}$, which will force $X$ to be K\"{a}hler, since it supports a $\partial\overline{\partial}$-closed Hermitian metric. Thus we complete the proof of Theorem \ref{Wang}.

\section{Proof of  Theorem \ref{main theorem}}
Now let $\pi:\widetilde{X}\rightarrow X$ be the resolution of singularities such that $\pi^{-1}{\mathscr{I}}$ is just $\mathcal{O}(-D)$ for some simple normal crossing divisor $D$ upstairs. Thus we have
\begin{align}
\pi^*T=(\pi^*T)_{ac}+(\pi^*T)_{sing}=\theta+c[D].\notag
\end{align}

Now we are on the way to prove Theorem \ref{main theorem}.  Since $\int_X T^n_{ac}>0$, we have that
\begin{align*}
\int_{\widetilde{X}}\theta^n=\int_{\widetilde{X}}(\pi^*T)^n_{ac}=\int_XT^n_{ac} >0,
\end{align*}
where second equality follows from Lemma \ref{push-forward}.  

From Lemma \ref{blow-up}, there is a Hermitian metric $\Omega$ on $\widetilde{X}$ such that $\partial\overline{\partial}\Omega=0$. 

Since $\theta$ is smooth and semi-positive,   $\int_{\widetilde{X}}\theta^n>0$,   from Nguyen's Theorem \ref{Ngu}, one can conclude that the class $[\theta]$ is big. Thus the class $[\pi_*(\pi^*T)]=[T]=[\pi_*(\theta+c[D])]$ is big. So $X$ is in the Fujiki class $\mathcal{C}$, and finally K\"{a}hler for the same reason as above,  which completes the  proof of  Theorem \ref{main theorem}.

\end{document}